\numberwithin{equation}{section}
\newtheorem{theorem}{Theorem}[section]
\newtheorem{corollary}{Corollary}[section]
\newtheorem{remark}{Remark}[section]
\newtheorem{example}{Example}[section]
\newenvironment{proof} 
{\par\noindent{\bf Proof.}} 
{\hfill$\scriptstyle \Box$}
\title{ On criteria for periodic wavelet frame\footnote{The research is supported by the Russian Science Foundation grant No. 23-11-00178, https://rscf.ru/project/23-11-00178/.}
}
\author{Anastassia Gorsanova\footnote{St. Petersburg State University, 7-9 Universitetskaya nab.,
St. Petersburg 199034, Russia}, 
 Elena Lebedeva\footnote{St. Petersburg State University, 7-9 Universitetskaya nab.,
St. Petersburg 199034, Russia}
}
\date{
an.gor275@gmail.com, ealebedeva2004@gmail.com
}
\begin{document}
\maketitle

\begin{abstract} 
 We provide constructive necessary and sufficient conditions for a family of periodic wavelets to be a Parseval wavelet frame. The criterion generalizes unitary and oblique extension principles. The case of one wavelet generator and refinable functions being trigonometric polynomials is discussed in details. 
\end{abstract}

\textit{Keywords}: Parseval frame; periodic wavelets; 

AMS Subject Classification: 42C15, 42C40

\section{Introduction}
Let $\mathbb{N}$, $\mathbb{Z}$, $\mathbb{Z}_+$ and $\mathbb{R}$ be the set of natural numbers, integers, non-negative integers and real numbers respectively.  By $L_p,$ $1\le p<\infty$  we denote the Lebesgue space of all 1-periodic complex-valued functions $f$ such that $|f|^p$ is integrable. By $L_{\infty}$ the space of essentially bounded measurable  1-periodic complex-valued functions is denoted.  The term $l_p$ is reserved for the corresponding sequence spaces.
Notions $\langle\cdot,\cdot\rangle$ and $\|\cdot\|_2$ are used for denoting inner products and norms both in spaces $L_2$ and $l_2$, that is,
$$
\langle f,g\rangle=\int\limits^1_0f(x)\overline{g(x)}dx\quad \text{for}\quad f,g\in L_2\,,$$
and
$$\langle a,b\rangle=\sum\limits_{k\in\mathbb{Z}} a(k)\overline{b(k)}\quad \text{for}\quad a,b\in l_2\,.$$
Corresponding norms are determined by $\|\cdot\|_2=\langle\cdot,\cdot\rangle^{1/2}$.
The $n$-th Fourier coefficient of $f\in L_p$ is denoted by
 $\widehat{f}(n)=\langle f,e^{2\pi i n\cdot}\rangle$, $n\in\mathbb{Z}$.

Let $S^n_j$, $n,\, j\in\mathbb{Z}_+$, be a $2^{-j}n$-shift operator on $L_2$ given by $S^n_jf := f(\cdot+2^{-j}n)$\,. For 1-periodic functions it is enough to consider $2^{-j}n$-shifts only for $n\in R_j$, where
$$R_j=\left\{-2^{j-1}+1, -2^{j-1}+2, \ldots, 2^{j-1}\right\}\,.$$

For all $j\in\mathbb{Z}_+$ consider positive integers $\rho_j$ and functions 
$\psi^m_j \in L_2,\ m=1, 2, \ldots, \rho_j$. 
 The collection of functions
$$
\Psi =\left\{S^k_j\psi^m_j : j\in\mathbb{Z}_+, m=1, 2, \ldots, \rho_j, k\in\mathcal{R}_j\right\}
$$ 
is referred to as a periodic wavelet system.
It is said, that the system 
$
\Psi
$ 
forms a Parseval wavelet frame in $L_2$ if
$$
\|f\|^2_2 
=
\sum\limits_{j=0}^{\infty}\sum\limits_{m=1}^{\rho_j}\sum\limits_{k\in\mathcal{R}_j}|\langle f, S^k_j\psi^m_j\rangle|^2,
\quad \forall\,f\in L_2\,.
$$
Sometimes (see, for example,  \cite{ghs},  \cite{gt}), wavelet system is written in the form 
$
\{\varphi_0\}\cup\left\{S^k_j\psi^m_j : j\in\mathbb{Z}_+, m=1, 2, \ldots, \rho_j, k\in\mathcal{R}_j\right\},
$
where $\varphi_0$ is a scaling function of the first periodic mulitresolution analysis (MRA)  set $V_0$. In this paper  we study wavelet systems of both types that associate and do not associate with MRA, so for convenience we denote all functions without shifts uniformly as $\psi^{m}_0,$
$m=1,\dots, \rho_0.$ If it is necessary one can assume that $\varphi_0 = \psi^1_0.$  

The aim of the paper is to provide a constructive criterion for the periodic wavelet system to be a Parseval wavelet frame. 
we provide periodic counterparts of general characterization (Theorem \ref{Th1}) and of criterion in terms of fundamental functions (Theorem \ref{Th2}). Theorem \ref{Th3} and Corollary \ref{Cor1} give a particular case of new constructive criterion. Based on this special case, we present the main result in Theorem \ref{Th4}. Some practical questions on how to apply the proposed criterion are discussed in Examples \ref{Ex1} and \ref{Ex2}.

For the non-periodic wavelet families the following general characterization is well-known (see~\cite{Lem},~\cite{Daub},~\cite{Grip},~\cite{NPS}):

\textbf{Theorem A.}
\textit{Suppose $\psi \in L_2(\mathbb{R})$. The family
     $ 
2^{j/2} \psi\left(2^j \cdot +k\right), j,k\in\mathbb{Z},
$ 
forms a Parseval frame in $L_2(\mathbb{R})$ iff 
$$
\sum_{j \in \mathbb{Z}}
\left|\widehat{\psi}(2^j \cdot)\right|^2 =1 \mbox{ a.e.},
$$
$$
\sum_{j \in \mathbb{Z}_+}
\widehat{\psi}(2^j \cdot) \overline{\widehat{\psi}\left(2^j(\cdot +k)\right)} =0 \mbox{ a.e. for all odd } k.
$$
}

This result is a powerful theoretical tool, however it is unclear how to use it practically to design wavelets. On the base of this criterion the following theorem is proved in  
 ~\cite{RShUEP}:

\textbf{Theorem B.}
\textit{ Suppose a function $\psi_0\in L_2(\mathbb{R})$ satisfies a scaling equation 
$$
\widehat{\psi_0}(\omega)=\tau_{0}(\omega/2)\widehat{\psi_{0}}(\omega/2),
$$
$\widehat{\psi_0}$ is continuous at $0,$ $\widehat{\psi_0}(0)=1.$
	Wavelet functions $\psi_i\in L_2(\mathbb{R})$, $i=1, \ldots, r$ are defined as
$$
\widehat{\psi_i}(\omega)=\tau_{i}(\omega/2)\widehat{\psi_{0}}(\omega/2),
$$
$\tau_{i} \in L_{\infty}$, $i=0, \ldots, r$. 
The fundamental function of mulitresolution analysis is defined as
$$
\Theta(\omega) := 
\sum_{j=0}^{\infty} \sum_{i=1}^r 
\left|\tau_i\left(2^j \omega \right)\right|^2
 \prod_{m=0}^{j-1}
\left|\tau_0\left(2^m \omega \right)\right|^2. 
$$
Then
$
\left\{2^{j/2}\psi_i\left(2^j x +k\right) : j,k\in\mathbb{Z}, i=1, \ldots, r \right\}
$ 
forms a Parseval frame in $L_2(\mathbb{R})$ iff 
$$
\lim_{j\to -\infty}\Theta(2^j\omega) = 1,
$$
$$
\Theta(2 \omega) \tau_0(\omega) \overline{\tau_0(\omega+1/2)}+
\sum_{i=1}^r \tau_i(\omega) \overline{\tau_i(\omega+1/2)} =0 \mbox{ for  a.e. } \omega.
$$}
The authors note that ``in practice it may be hard to select'' wavelet masks $\tau_i,$ $i=1, \ldots, r$, they are given implicitly via $\Theta$.  
In Corollary~6.7  ~\cite{RShUEP} very important special case 
$\Theta \equiv 1$
is introduced and discussed for the first time. It has become the standard way universally applied in non-periodic and periodic setting known as the celebrated unitary extension principle (UEP).  However, it  gives only sufficient conditions for a wavelet system to be a Parseval frame. 

The main contribution of the current paper to the topic is a constructive, explicit criterion for a Parseval wavelet frame in periodic setting. In the next section we provide periodic counterparts of general characterization (Theorem \ref{Th1}) and of criterion in terms of fundamental functions (Theorem \ref{Th2}). Theorem \ref{Th3} and Corollary \ref{Cor1} give a particular case of new constructive criterion. Based on this special case, we present the main result in Theorem \ref{Th4}. Some practical questions on how to apply the proposed criterion are discussed in Examples \ref{Ex1} and \ref{Ex2}.

\section{Criteria for wavelet frames}
We start with a periodic counterpart of Theorem A, a general characterization of a Parseval wavelet frame in terms of Fourier coefficients 
\begin{theorem}
\label{Th1}
The family 
$
\left\{S^k_j\psi^m_j : j\in\mathbb{Z}_+, m=1, 2, \ldots, \rho_j, k\in\mathcal{R}_j\right\}
$ 
forms a Parseval wavelet frame in $L_2$ iff 

 \begin{equation}
 \label{1th1}
\sum_{j=0}^{\infty} \sum_{m=1}^{\rho_j} 2^j \left|\widehat{\psi^m_{j}}(n)\right|^2 = 1 \ \mbox{ for all } n \in \mathbb{Z},
\end{equation}
 \begin{equation}
 \label{2th1}
\sum_{q=0}^{j} 2^{q} \sum_{m=1}^{\rho_{q}} \overline{\widehat{\psi^m_{q}}(n)} \widehat{\psi^m_{q}}(2^j k + n)  = 0
\end{equation}
for all odd $k,$ $n \in \mathbb{Z},$ $j \in \mathbb{Z}_+$.     
\end{theorem}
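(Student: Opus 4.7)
The plan is to rewrite the Parseval identity
$$
\|f\|_2^2 = \sum_{j=0}^{\infty}\sum_{m=1}^{\rho_j}\sum_{k\in R_j}|\langle f, S^k_j\psi^m_j\rangle|^2
$$
in the Fourier domain, use the discrete orthogonality on the set $R_j$ to collapse the $k$-sum, and then isolate (\ref{1th1}) and (\ref{2th1}) as the pointwise conditions that force the identity for every $f\in L_2$.

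First, a direct change of variables gives $\widehat{S^k_j\psi^m_j}(n)=e^{2\pi i nk/2^{j}}\widehat{\psi^m_j}(n)$, so Parseval on the torus yields
$$
\langle f, S^k_j\psi^m_j\rangle=\sum_{n\in\mathbb{Z}}\widehat{f}(n)\overline{\widehat{\psi^m_j}(n)}\,e^{-2\pi i nk/2^{j}}.
$$
Squaring this and summing over $k\in R_j$, the fact that $R_j$ is a complete residue system modulo $2^j$ gives $\sum_{k\in R_j}e^{-2\pi i(n-n')k/2^j}=2^j$ when $n\equiv n'\pmod{2^j}$ and $0$ otherwise, so
$$
\sum_{k\in R_j}|\langle f, S^k_j\psi^m_j\rangle|^2 = 2^j\sum_{n\in\mathbb{Z}}\sum_{\ell\in\mathbb{Z}}\widehat{f}(n)\overline{\widehat{f}(n+2^j\ell)}\,\overline{\widehat{\psi^m_j}(n)}\widehat{\psi^m_j}(n+2^j\ell).
$$

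Next I sum over $m$ and $j$ and split into the diagonal ($\ell=0$) and off-diagonal ($\ell\ne 0$) parts. Matching against $\|f\|_2^2=\sum_n|\widehat{f}(n)|^2$, the diagonal contribution reproduces $\|f\|_2^2$ for every $f$ precisely when $\sum_{j,m}2^j|\widehat{\psi^m_j}(n)|^2=1$ for each $n$, which is (\ref{1th1}). For the off-diagonal part I would write each $\ell\ne 0$ uniquely as $\ell=2^s k$ with $k$ odd and $s\ge 0$, and regroup by the effective level $J:=j+s\ge j$; the off-diagonal sum then takes the form
$$
\sum_{J\ge 0}\sum_{k\ \text{odd}}\sum_{n\in\mathbb{Z}}\widehat{f}(n)\overline{\widehat{f}(n+2^J k)}\Bigl(\sum_{q=0}^{J}2^q\sum_{m=1}^{\rho_q}\overline{\widehat{\psi^m_q}(n)}\widehat{\psi^m_q}(n+2^J k)\Bigr),
$$
in which the inner bracket is exactly the left-hand side of (\ref{2th1}). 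This observation makes the sufficiency direction immediate: (\ref{1th1}) and (\ref{2th1}) together collapse the full sum to $\|f\|_2^2$.

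For necessity I would test against trigonometric polynomials, a dense subset of $L_2$. The choice $f=e^{2\pi i n_0\cdot}$ pins down (\ref{1th1}) at $n=n_0$; the choice $f=c_1 e^{2\pi i n_0\cdot}+c_2 e^{2\pi i(n_0+2^J k)\cdot}$ with odd $k$, allowing $c_1,c_2\in\mathbb{C}$ to vary, turns the frame identity into a Hermitian $2\times 2$ quadratic form in $(c_1,c_2)$ whose off-diagonal entry must vanish, yielding (\ref{2th1}). The step I expect to require the most care is the double interchange of summations between the $(j,m,\ell)$ and $(J,k,q)$ indexings, together with the absolute-convergence justification needed to make the sufficiency argument run for general $f\in L_2$; on trigonometric-polynomial test functions every sum involved is finite so the extraction of the pointwise identities is clean, but the passage back to general $f$ will have to be handled by a Cauchy--Schwarz estimate combined with a density argument.
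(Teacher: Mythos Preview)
Your proposal is correct and follows essentially the same route as the paper: expand the frame sum in Fourier coefficients, collapse the $k$-sum via discrete orthogonality on $R_j$, split into diagonal and off-diagonal parts, and regroup the off-diagonal terms by writing $\ell=2^s k$ with $k$ odd and $J=j+s$. Your necessity argument via a Hermitian $2\times 2$ form in $(c_1,c_2)$ is equivalent to the paper's device of testing with the two specific pairs $(c_1,c_2)=(1,1)$ and $(i,1)$, and the paper likewise works with trigonometric polynomials and invokes density (citing Gripenberg for the Fubini step).
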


\begin{proof}
Let $f$ be a trigonometric polynomial. 
    To check sufficiency we start with a standard trick in wavelet setting 
   \begin{equation*}
    \begin{split}
    &
\sum\limits_{k\in\mathcal{R}_j}|\langle f, S^k_j\psi^m_j\rangle|^2 
 = 
 \sum\limits_{k\in\mathcal{R}_j}\left|\int_0^1 f(x) \overline{S^k_j\psi^m_j}(x) \, dx \right|^2 
    = 
    \sum\limits_{k\in\mathcal{R}_j}\left|\sum_{n\in \mathbb{Z}} \widehat{f}(n)\overline{\widehat{\psi^m_j}(n)} e^{-2 \pi i 2^{-j} k n} \right|^2 
    \\
   &
   = \sum\limits_{k\in\mathcal{R}_j}\left|\sum\limits_{n\in\mathcal{R}_j}\sum_{q\in \mathbb{Z}} \widehat{f}(2^j q + n)\overline{\widehat{\psi^m_j}(2^j q + n)} e^{-2 \pi i 2^{-j} k(2^jq+n)} \right|^2
    \\
    &
    = \sum\limits_{k\in\mathcal{R}_j}\left| \sum\limits_{n\in\mathcal{R}_j} h^m_j(n) e^{-2 \pi i 2^{-j} k n}\right|^2,
    \end{split}   
   \end{equation*} 
where
$
h^m_j(n):=\sum_{q\in \mathbb{Z}} \widehat{f}(2^j q + n)
\overline{\widehat{\psi^m_j}(2^j q + n)}.
$
 By the Plancherel theorem for DFT, we get
 \begin{equation*}
 \begin{split}
  &   \sum\limits_{k\in\mathcal{R}_j}|\langle f, S^k_j\psi^m_j\rangle|^2
 = 
  2^j \sum\limits_{n\in\mathcal{R}_j} \left|h^m_j(n) \right|^2
  =
 2^j \sum\limits_{n\in\mathcal{R}_j}
 \left(
 \sum_{q\in \mathbb{Z}} \left|\widehat{f}(2^j q + n)
\overline{\widehat{\psi^m_j}(2^j q + n)}\right|^2 
\right.
\\
&
\left.+
\sum_{q\neq q'} \widehat{f}(2^j q + n) \overline{\widehat{f}(2^j q' + n)} 
 \overline{\widehat{\psi^m_j}(2^j q + n)} \widehat{\psi^m_j}(2^j q' + n)
 \right)
\\
 &
 =
 2^j \sum_{n\in \mathbb{Z}} \left|\widehat{f}(n) \overline{\widehat{\psi^m_j}( n)}   \right|^2
 +
 2^j \sum_{n\in \mathbb{Z}}  \sum_{q \neq 0}
 \widehat{f}(n) \overline{\widehat{f}(2^j q + n)} \,
 \overline{\widehat{\psi^m_j}( n)} \widehat{\psi^m_j}(2^j q + n). 
 \end{split}  
 \end{equation*}
 
 Therefore,
 \begin{equation}
 \label{3th1}
\begin{split}
    &
\sum\limits_{j=0}^{\infty}\sum\limits_{m=1}^{\rho_j}\sum\limits_{k\in\mathcal{R}_j}|\langle f, S^k_j\psi^m_j\rangle|^2 = 
 \sum_{n\in \mathbb{Z}}  \left|\widehat{f}(n) \right|^2
 \sum\limits_{j=0}^{\infty}\sum\limits_{m=1}^{\rho_j} 2^j \left|\widehat{\psi^m_{j}}(n)\right|^2
 \\
 &
 +  \sum\limits_{j=0}^{\infty}\sum\limits_{m=1}^{\rho_j} 2^j \sum_{n\in \mathbb{Z}}  \sum_{q \neq 0}
 \widehat{f}(n) \overline{\widehat{f}(2^j q + n)}  \,
 \overline{\widehat{\psi^m_j}( n)} \widehat{\psi^m_j}(2^j q + n).
 \end{split}
 \end{equation}
 For the second term replacing $q=k 2^p,$ where $k$ is odd, by the Fubini theorem and changing of summation index $j+p \to j$  we finally obtain
 \begin{equation}
 \label{4th1}
\begin{split}
    &
  \sum_{k \mbox{ {\footnotesize{odd}}}} \sum_{n\in \mathbb{Z}} \sum\limits_{j=0}^{\infty} \sum\limits_{m=1}^{\rho_j}   \sum\limits_{p=0}^{\infty} 2^j  
  \widehat{f}(n) \overline{\widehat{f}(2^{j+p} k + n)}  \,
 \overline{\widehat{\psi^m_j}( n)} \widehat{\psi^m_j}(2^{j+p} k + n)
 \\
 &
 = \sum_{k \mbox{ {\footnotesize{odd}}}} \sum_{n\in \mathbb{Z}} \sum\limits_{j=0}^{\infty}
 2^j \widehat{f}(n) \overline{\widehat{f}(2^{j} k + n)}  
 \sum\limits_{p=0}^{j} 2^{-p}  \sum\limits_{m=1}^{\rho_{j-p}}  
   \overline{\widehat{\psi^m_{j-p}}( n)} \widehat{\psi^m_{j-p}}(2^{j} k + n).
 \end{split}
 \end{equation}
 We may use the Fubini theorem. It can be proved in the same way as in   ~\cite{Grip}.
 The set of trigonometric polynomials is everywhere  dense  in $L_2$.  
 Thus, sufficiency is proved.

 To check the necessity we fix $n_0\in\mathbb{Z}$ and set $\widehat{f}(n) = \delta_{n,n_0}.$ 
 Then equality (\ref{3th1}) takes the form
 $$
\sum\limits_{j=0}^{\infty}\sum\limits_{m=1}^{\rho_j}\sum\limits_{k\in\mathcal{R}_j}|\langle f, S^k_j\psi^m_j\rangle|^2 = 
  \sum\limits_{j=0}^{\infty}\sum\limits_{m=1}^{\rho_j} 2^j \left|\widehat{\psi^m_{j}}(n_0)\right|^2.
 $$
By assumption, the left-hand side is equal to $\|f\|^2 =1.$ So,  equality (\ref{1th1}) is fulfilled.
We check  (\ref{2th1}). Substituting (\ref{1th1}) into (\ref{3th1}) and using (\ref{4th1}) by definition of the Parseval frame we get 
$$
  \sum_{k \,\mbox{{\footnotesize{odd}}}} \sum_{n\in \mathbb{Z}} \sum\limits_{j=0}^{\infty}
 2^j \widehat{f}(n) \overline{\widehat{f}(2^{j} k + n)}  
 \sum\limits_{p=0}^{j} 2^{-p}  \sum\limits_{m=1}^{\rho_{j-p}}  
   \overline{\widehat{\psi^m_{j-p}}( n)} \widehat{\psi^m_{j-p}}(2^{j} k + n)=0 \mbox{ for all } f\in L_2.
$$
We fix an odd integer $k_0$, $j_0\in\mathbb{Z}_+$, $n_0\in\mathbb{Z}$ and set for the first time
$\widehat{f}(n) =2^{-j_0/2}$ for $n=n_0$ and $n=2^{j_0}k_0 + n_0$, $\widehat{f}(n) =0$, otherwise.   
Substituting the function into the last equality we obtain
$$
\sum\limits_{p=0}^{j_0} 2^{-p}  \sum\limits_{m=1}^{\rho_{j_0-p}} \left( 
   \overline{\widehat{\psi^m_{j_0-p}}( n_0)} \widehat{\psi^m_{j_0-p}}(2^{j_0} k_0 + n_0)
   +
   \overline{\widehat{\psi^m_{j_0-p}}(2^{j_0} k_0 + n_0)} \widehat{\psi^m_{j_0-p}}(n_0)\right)
   =0
$$
For the second time, we set 
$\widehat{f}(n) =i 2^{-j_0/2}$ for $n=n_0$, $\widehat{f}(n) =2^{-j_0/2}$ for $n=2^{j_0}k_0 + n_0$, $\widehat{f}(n) =0$, otherwise. As a result, we get 
$$
i\sum\limits_{p=0}^{j_0} 2^{-p}  \sum\limits_{m=1}^{\rho_{j_0-p}} \left(  
   \overline{\widehat{\psi^m_{j_0-p}}( n_0)} \widehat{\psi^m_{j_0-p}}(2^{j_0} k_0 + n_0)
   -
   \overline{\widehat{\psi^m_{j_0-p}}(2^{j_0} k_0 + n_0)} \widehat{\psi^m_{j_0-p}}(n_0)\right)
   =0.
$$
The last two equalities prove  (\ref{2th1}). 

\end{proof}

In the following theorem we obtain necessary and sufficient conditions for a wavelet system to be a Parseval wavelet frame in terms of scaling masks, wavelet masks,  and fundamental coefficients. It is the periodic counterpart of Theorem B. 
Let $\mathcal{S}\left(2^j\right)$ be the space of all $2^j$-periodic sequences $\{c_j(k)\}_{k\in\mathbb{Z}}$ of complex numbers, i.e. $c_j(2^jp+k)=c_j(k)$ for all $p\in\mathbb{Z}$.
\begin{theorem}
\label{Th2}
  Suppose functions $\varphi_j\in L_2,\ j\in\mathbb{N}$, satisfy the refinement equation
\begin{equation}\label{refeq}
\widehat{\varphi_j}(n)=\sqrt{2}\,\widehat{a_{j+1}}(n)\widehat{\varphi_{j+1}}(n),\quad n\in\mathbb{Z}
\end{equation}
for some $\widehat{a_{j+1}}\in\mathcal{S}\left(2^{j+1}\right)$. 
For every $j\in\mathbb{Z}_+$ fix a positive integer $\rho_j$ and suppose  $\psi^m_j\in L_2$, $m=1, \ldots, \rho_j$ are defined by the wavelet equation
\begin{equation}\label{waveq}
\widehat{\psi^m_j}(n)=\sqrt{2}\,\widehat{b^m_{j+1}}(n)\widehat{\varphi_{j+1}}(n),\quad n\in\mathbb{Z}\,
\end{equation}
with some sequences $\widehat{b^m_{j+1}}\in\mathcal{S}\left(2^{j+1}\right),\ m=1, \ldots, \rho_{j}$.

Then the collection 
$\left\{S^k_j\psi^m_j : j\in\mathbb{Z}_+, m=1, 2, \ldots, \rho_j, k\in\mathcal{R}_j\right\}$ forms a Parseval wavelet frame in $L_2$
iff the following two conditions are fulfilled:
\begin{equation}\label{con1}
\lim\limits_{j\to\infty} 2^j |\widehat{\varphi_j}(n)|^2 \theta_j(n) = 1, \quad  n \in \mathbb{Z},
\end{equation}
\begin{equation}\label{con2}
 \theta_j(n)\overline{\widehat{a_{j+1}}(n)}\widehat{a_{j+1}}(n+2^j)+\sum\limits_{m=1}^{\rho_{j}}\overline{\widehat{b^m_{j+1}}(n)}\widehat{b^m_{j+1}}(n+2^j) = 0,
\quad n\in\mathbb{Z}, \  j\in\mathbb{Z}_+,
\end{equation}
unless $\widehat{\varphi_{j+1}}(n)=0$ or $\widehat{\varphi_{j+1}}(2^j k+n)=0$, 
where 
the sequence $\theta_j(n)\in S(2^j)$ is defined recursively
\begin{equation} 
\label{theta}
\begin{split}
&    \sum\limits_{m=1}^{\rho_{j}}\left|\widehat{b^m_{j+1}}(n)\right|^2 + 
    \theta_j(n) \left|\widehat{a_{j+1}}(n)\right|^2 = \theta_{j+1}(n),
\\
&
 \theta_1(n) = \sum\limits_{m=1}^{\rho_{0}}\left|\widehat{b^m_{1}}(n)\right|^2, 
 \qquad
 \theta_0(n) = 0.
\end{split}
\end{equation}
\end{theorem}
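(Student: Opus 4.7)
The strategy is to invoke Theorem \ref{Th1}, which characterizes the Parseval frame property by (\ref{1th1}) and (\ref{2th1}), and then to rewrite each of these identities in terms of the masks $\widehat{a_{j+1}}$, $\widehat{b^m_{j+1}}$ and the scaling coefficients $\widehat{\varphi_{j+1}}$ furnished by (\ref{refeq})--(\ref{waveq}); the conditions (\ref{con1}) and (\ref{con2}) should then emerge as the natural algebraic consequences. Substituting (\ref{waveq}) into the left-hand side of (\ref{1th1}) gives $2^j\sum_m|\widehat{\psi^m_j}(n)|^2=2^{j+1}|\widehat{\varphi_{j+1}}(n)|^2\sum_m|\widehat{b^m_{j+1}}(n)|^2$. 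Using the recursion (\ref{theta}) to rewrite the mask sum as $\theta_{j+1}(n)-\theta_j(n)|\widehat{a_{j+1}}(n)|^2$ and the refinement identity $2^j|\widehat{\varphi_j}(n)|^2=2^{j+1}|\widehat{a_{j+1}}(n)|^2|\widehat{\varphi_{j+1}}(n)|^2$ produces the telescoping relation
\begin{equation*}
2^j\sum_{m=1}^{\rho_j}|\widehat{\psi^m_j}(n)|^2 = 2^{j+1}|\widehat{\varphi_{j+1}}(n)|^2\theta_{j+1}(n)-2^j|\widehat{\varphi_j}(n)|^2\theta_j(n),
\end{equation*}
whose partial sum from $j=0$ to $N$ collapses, thanks to $\theta_0\equiv 0$, to $2^{N+1}|\widehat{\varphi_{N+1}}(n)|^2\theta_{N+1}(n)$, so that (\ref{1th1}) is exactly (\ref{con1}) in the limit $N\to\infty$.

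For (\ref{2th1}) I fix $j$, an odd $k$, and $n\in\mathbb{Z}$ and iterate (\ref{refeq}) down to level $j+1$ to get $\widehat{\varphi_{q+1}}(\cdot)=2^{(j-q)/2}\prod_{r=q+2}^{j+1}\widehat{a_r}(\cdot)\,\widehat{\varphi_{j+1}}(\cdot)$ for every $0\leq q\leq j$. The key periodicity observation is that a shift by $2^jk$ leaves $\widehat{a_r}\in\mathcal{S}(2^r)$ unchanged for $r\leq j$ and $\widehat{b^m_{q+1}}\in\mathcal{S}(2^{q+1})$ unchanged for $q\leq j-1$, while at the top level $r=j+1$ the shift $2^jk$ acts, modulo $2^{j+1}$, as a shift by $2^j$ because $k$ is odd; the same holds for $\widehat{b^m_{j+1}}$ in the $q=j$ term. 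Substituting and collecting yields
\begin{equation*}
\begin{split}
&\sum_{q=0}^{j}2^q\sum_{m=1}^{\rho_q}\overline{\widehat{\psi^m_q}(n)}\widehat{\psi^m_q}(2^jk+n)\\
&\quad=2^{j+1}\overline{\widehat{\varphi_{j+1}}(n)}\widehat{\varphi_{j+1}}(2^jk+n)\left[\theta_j(n)\overline{\widehat{a_{j+1}}(n)}\widehat{a_{j+1}}(n+2^j)+\sum_{m=1}^{\rho_j}\overline{\widehat{b^m_{j+1}}(n)}\widehat{b^m_{j+1}}(n+2^j)\right],
\end{split}
\end{equation*}
where the appearance of $\theta_j(n)$ relies on the closed form $\theta_j(n)=\sum_{q=0}^{j-1}|\prod_{r=q+2}^{j}\widehat{a_r}(n)|^2\sum_m|\widehat{b^m_{q+1}}(n)|^2$, which I would verify by an easy induction on $j$ using (\ref{theta}). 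The vanishing of this expression is equivalent to (\ref{con2}) together with the stated exception.

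The main obstacle will be the bookkeeping in the second step: tracking exactly how the shift $2^jk$ interacts with the nested periods $2^{q+1}$ and $2^r$ at each level, and isolating the top-level contribution at $r=j+1$, $q=j$ from the cumulative lower-level contribution, is where mistakes are easiest to make. Once the compact identification of the lower-level contribution with $\theta_j(n)$ is in hand, everything else is routine substitution and algebra, and the two exceptional cases $\widehat{\varphi_{j+1}}(n)=0$ and $\widehat{\varphi_{j+1}}(2^jk+n)=0$ appear transparently as the only way (\ref{2th1}) can hold at the fixed triple $(n,k,j)$ without forcing the bracket in (\ref{con2}) to vanish.
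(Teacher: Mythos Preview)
Your proposal is correct and follows essentially the same approach as the paper's proof: both reduce to Theorem~\ref{Th1} and then rewrite (\ref{1th1}) and (\ref{2th1}) via (\ref{refeq})--(\ref{waveq}) to obtain (\ref{con1}) and (\ref{con2}). For the first equivalence the paper expands $\theta_q(n)$ into the closed form (\ref{theta1}) and multiplies by $2^q|\widehat{\varphi_q}(n)|^2$, whereas you use the recursion (\ref{theta}) directly to get a telescoping sum---a minor stylistic difference only; for the second equivalence your iterated-refinement computation, periodicity analysis of the shift $2^jk$, and identification of the lower-level contribution with $\theta_j(n)$ match the paper's term-by-term calculation exactly.
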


\begin{remark}
The sequences  $\widehat{a_{j}}(n),$ $\widehat{b^m_{j+1}}(n),$ $\theta_{j+1}(n),$ $n\in\mathbb{Z}$, are refereed to as scaling masks, wavelet masks,  and fundamental coefficients respectively.  
\end{remark}

\begin{proof}
Solving recursion (\ref{theta}), we get
\begin{equation}
\label{theta1}
\begin{split}
    &
\theta_q(n) = \sum\limits_{m=1}^{\rho_{q-1}}\left|\widehat{b^m_{q}}(n)\right|^2 + \sum\limits_{m=1}^{\rho_{q-2}}\left|\widehat{b^m_{q-1}}(n)\right|^2  \left|\widehat{a_{q}}(n)\right|^2+\dots
\\
&
    +\sum\limits_{m=1}^{\rho_{0}}\left|\widehat{b^m_{1}}(n)\right|^2
\prod_{p=2}^q\left|\widehat{a_{p}}(n)\right|^2.
\end{split}
\end{equation}
Therefore,
\begin{equation*}
    \begin{split}
&
\theta_q(n) 2^q |\widehat{\varphi_q}(n)|^2= \sum\limits_{m=1}^{\rho_{q-1}}\left|\widehat{b^m_{q}}(n)\right|^2 2^q |\widehat{\varphi_q}(n)|^2 + \sum\limits_{m=1}^{\rho_{q-2}}\left|\widehat{b^m_{q-1}}(n)\right|^2  \left|\widehat{a_{q}}(n)\right|^2 2^q |\widehat{\varphi_q}(n)|^2
\\
&
+\dots+\sum\limits_{m=1}^{\rho_{0}}\left|\widehat{b^m_{1}}(n)\right|^2
\prod_{p=2}^q\left|\widehat{a_{p}}(n)\right|^2 2^q |\widehat{\varphi_q}(n)|^2.        
    \end{split}
\end{equation*}
By (\ref{refeq}) and (\ref{waveq}), the right-hand side takes the form 
$$
\sum_{m=1}^{\rho_{q-1}} 2^{q-1} \left|\widehat{\psi^m_{q-1}}(n)\right|^2 + \sum_{m=1}^{\rho_{q-2}} 2^{q-2} \left|\widehat{\psi^m_{q-2}}(n)\right|^2 +\dots +
\sum_{m=1}^{\rho_{0}} 2^{0} \left|\widehat{\psi^m_{0}}(n)\right|^2.
$$
Therefore, we obtain the identity
$$
\sum_{j=0}^{q-1} \sum_{m=1}^{\rho_j} 2^j \left|\widehat{\psi^m_{j}}(n)\right|^2 
=
\theta_q(n) 2^q |\widehat{\varphi_q}(n)|^2.
$$
Now passing to the limit $q\to \infty$ we see that
condition   (\ref{con1}) is equivalent to  (\ref{1th1}).

To check the equivalence (\ref{2th1}) and (\ref{con2}), unless $\widehat{\varphi_{j+1}}(n)=0$ or $\widehat{\varphi_{j+1}}(2^j k+n)=0$, we start with the left-hand side of (\ref{2th1}). 
By (\ref{refeq}) and (\ref{waveq}) taking into account periodicity of $\widehat{a_{j}}(n)$ and $\widehat{b^m_{j}}(n)$ we get
\begin{equation*}
    \begin{split}
    &
\sum_{m=1}^{\rho_{j}} \overline{\widehat{\psi^m_{j}}(n)} \widehat{\psi^m_{j}}(2^j k + n)
= 
2\sum_{m=1}^{\rho_{j}}  \overline{\widehat{b^m_{j+1}}(n)} \widehat{b^m_{j+1}}(2^j k + n)
\overline{\widehat{\varphi_{j+1}}(n)}\widehat{\varphi_{j+1}}(2^j k + n) 
\\
&
=2\sum_{m=1}^{\rho_{j}}  \overline{\widehat{b^m_{j+1}}(n)} \widehat{b^m_{j+1}}(2^j  + n)
\overline{\widehat{\varphi_{j+1}}(n)}\widehat{\varphi_{j+1}}(2^j k + n).   
    \end{split}
\end{equation*}
\begin{equation*}
    \begin{split}
    &
\sum_{m=1}^{\rho_{j-1}} \overline{\widehat{\psi^m_{j-1}}(n)} \widehat{\psi^m_{j-1}}(2^j k + n)
= 
2\sum_{m=1}^{\rho_{j-1}}  \overline{\widehat{b^m_{j}}(n)} \widehat{b^m_{j}}(2^j k + n)
\overline{\widehat{\varphi_{j}}(n)}\widehat{\varphi_{j}}(2^j k + n) 
\\
&
=2^2\sum_{m=1}^{\rho_{j-1}}  \left|\widehat{b^m_{j}}(n)\right|^2 
\overline{\widehat{a_{j+1}}(n)}\,
\widehat{a_{j+1}}(2^j + n)
\overline{\widehat{\varphi_{j+1}}(n)}\widehat{\varphi_{j+1}}(2^j k + n). 
 \end{split}
\end{equation*}
Continue calculations we get for $j=0$
\begin{equation*}
    \begin{split}
    &
\sum_{m=1}^{\rho_{0}} \overline{\widehat{\psi^m_{0}}(n)} \widehat{\psi^m_{0}}(2^j k + n)
= 
2\sum_{m=1}^{\rho_{0}}  \overline{\widehat{b^m_{1}}(n)} \widehat{b^m_{1}}(2^j k + n)
\overline{\widehat{\varphi_{1}}(n)}\widehat{\varphi_{1}}(2^j k + n) 
\\
&
=2^{j+1}\sum_{m=1}^{\rho_{0}}  \left|\widehat{b^m_{1}}(n)\right|^2
\prod_{q=2}^j\left|\widehat{a_{q}}(n)\right|^2 
\overline{\widehat{a_{j+1}}(n)}\,
\widehat{a_{j+1}}(2^j + n)
\overline{\widehat{\varphi_{j+1}}(n)}\widehat{\varphi_{j+1}}(2^j k + n). 
 \end{split}
\end{equation*}
Substituting all these expressions into the left-hand side of (\ref{2th1}) we obtain 
\begin{equation*}
    \begin{split}
    &
\sum_{q=0}^{j} 2^{q} \sum_{m=1}^{\rho_{q}} \overline{\widehat{\psi^m_{q}}(n)} \widehat{\psi^m_{q}}(2^j k + n) 
= 
2^{j+1} 
\left(
\sum_{m=1}^{\rho_{j}}  \overline{\widehat{b^m_{j+1}}(n)} \widehat{b^m_{j+1}}(2^j  + n)
\right.
\\
&
\left.
+
\left(
\sum_{m=1}^{\rho_{j-1}}  \left|\widehat{b^m_{j}}(n)\right|^2
+\dots+
\sum_{m=1}^{\rho_{0}}  \left|\widehat{b^m_{1}}(n)\right|^2
\prod_{q=2}^j\left|\widehat{a_{q}}(n)\right|^2
\right)
\right.
\\
&
\left.
\times\overline{\widehat{a_{j+1}}(n)}\,
\widehat{a_{j+1}}(2^j + n)
\right)
\overline{\widehat{\varphi_{j+1}}(n)}\widehat{\varphi_{j+1}}(2^j k + n)
\end{split}
\end{equation*}
By (\ref{theta1}), the right-hand side of the last equality takes the form
$$
2^{j+1} 
\left(
\sum_{m=1}^{\rho_{j}}  \overline{\widehat{b^m_{j+1}}(n)} \widehat{b^m_{j+1}}(2^j  + n)
+
\theta_j(n)
\overline{\widehat{a_{j+1}}(n)}\,
\widehat{a_{j+1}}(2^j + n)
\right)
\overline{\widehat{\varphi_{j+1}}(n)}\widehat{\varphi_{j+1}}(2^j k + n).
$$
Thus, the equivalence (\ref{2th1}) and (\ref{con2}), unless $\widehat{\varphi_{j+1}}(n)=0$ or $\widehat{\varphi_{j+1}}(2^j k+n)=0$, follows.
\end{proof}

\begin{remark}
In the unitary extension principle, there is one more assumption on the refinable sequence  (see Theorem 2.1   \cite{ghs}) 
$$
\lim\limits_{q\to\infty} 2^q |\widehat{\varphi_q}(n)|^2 =1.
$$
It follows from Theorem \ref{Th2} that actually we do not need this condition to get a Parseval frame. However, if we add this assumption to the statement, then (\ref{con1}) should be replaced by
$$
\lim\limits_{q\to\infty} \theta_j(n) =1.
$$
\hfill $\scriptstyle \Box$
\end{remark}

Theorem \ref{Th2} is a periodic counterpart of Theorem 6.5   \cite{RShUEP} cited in the Introduction as Theorem B. The sequence $\theta_j(n)$ corresponds to the fundamental function of multiresolution  $\Theta$.   
Looking at these Theorems, the natural question arises. How to use the results?   Suppose refinable functions  are known. How to find wavelets? In other words,  the coefficients $\widehat{a_{j+1}}(n)$ are given. How to find $\widehat{b^m_{j+1}}(n)$?
For non-periodic case in Corollary 6.7   \cite{RShUEP} special case 
$\Theta \equiv 1$, known as the unitary extension principle,
is discussed for the first time. In  \cite{dhrs} a more flexible recipe for designing of frames is provided, namely the oblique extension principle, however it is proved that  the unitary and oblique extension principles are equivalent. Both principles give only sufficient conditions for a wavelet system to be a Parseval frame.
In periodic setting it turns out that it is possible give a constructive description of   $\theta_j(n)$ 
 and wavelet coefficients  $\widehat{b^m_{j+1}}(n)$.  
In the following two  Theorems and one Corollary we propose an explicit necessary and sufficient condition for a wavelet system to be a Parseval frame in terms 
of the scaling coefficients $\widehat{a_{j+1}}(n)$ and auxiliary explicitly defined coefficients  only. In Theorem \ref{Th3}, there are two additional assumptions    
``$\widehat{\varphi_j}(n) \neq 0$ for all $j\in \mathbb{N}$, $n\in\mathbb{Z}$''
and 
``$
\sum\limits_{m=1}^{\rho_{0}}\left|\widehat{b^m_{1}}(n)\right|^2  \neq 0.
$
''
We need these restrictions to provide the essential ingredient of the proof without distracting from secondary details. In fact, the proof of the general result in Theorem \ref{Th4} is reduced to this particular case. Corollary  \ref{Cor1} discusses the case of one wavelet generator $\rho_j=1.$

\begin{theorem}
\label{Th3}
    Suppose  functions $\varphi_j\in L_2,\ j\in\mathbb{N}$, satisfy the refinement 
    equation (\ref{refeq})
for some $\widehat{a_{j+1}}\in\mathcal{S}\left(2^{j+1}\right)$
and $\widehat{\varphi_j}(n) \neq 0$ for all $j\in \mathbb{N}$, $n\in\mathbb{Z}$.
For every $j\in\mathbb{Z}_+$ fix a positive integer $\rho_j$ and suppose  $\psi^m_j\in L_2$, $m=1, \ldots, \rho_j$ are defined by the wavelet equation
(\ref{waveq})
with some sequences $\widehat{b^m_{j+1}}\in\mathcal{S}\left(2^{j+1}\right),\ m=1, \ldots, \rho_{j}$. Suppose 
$
\sum\limits_{m=1}^{\rho_{0}}\left|\widehat{b^m_{1}}(n)\right|^2  \neq 0.
$

We introduce auxiliary coefficients   
$
\tilde{a}_{j}(n),\, \tilde{b}^m_{j}(n) \in \mathcal{S}\left(2^{j}\right),
j\ge 2,
$
\begin{equation}
\label{tildea}
\tilde{a}_{j}(n) = 
\left( \prod\limits_{r=1}^{\rho_{j-1}} \cos t^k_r \right)
e^{i t^{k}_{\rho_{j-1}+1}} \neq 0, 
\end{equation}
\begin{equation}
\label{tildeb}
\tilde{b}^m_{j}(n) = 
\left( \prod\limits_{r=m+1}^{\rho_{j-1}} \cos t^k_r \right) \sin t^k_m 
e^{i t^{k}_{\rho_{j-1}+1+m}},
\end{equation}
$k=0$ for $n=0,\dots,2^{j-1}-1$, 
$k=1$ for $n=2^{j-1},\dots, 2^{j}-1$, $j \in \mathbb{N},$ 
where the parameters 
$t^k_r =t^k_r(j,n),$ $r=1,\dots,  2\rho_{j-1}+1,$
satisfy the system
\begin{equation}
\label{sys2}
\begin{cases}
\sum\limits_{m=0}^{\rho_{j-1}} \cos \left(t^0_{\rho_{j-1}+1+m}-t^1_{\rho_{j-1}+1+m}\right)
\prod\limits_{k=0}^1\sin t^k_m 
\prod\limits_{r=m+1}^{\rho_{j-1}} \cos t^k_r = 0,
\\
\sum\limits_{m=0}^{\rho_{j-1}} \sin \left(t^0_{\rho_{j-1}+1+m}-t^1_{\rho_{j-1}+1+m}\right)
\prod\limits_{k=0}^1\sin t^k_m 
\prod\limits_{r=m+1}^{\rho_{j-1}} \cos t^k_r = 0,
\end{cases} 
\end{equation}
the factor 
$
\prod\limits_{r=\rho_{j-1}+1}^{\rho_{j-1}} \cos t^k_r 
$
is understood as $1$.

Then the collection 
$\left\{S^k_j\psi^m_j : j\in\mathbb{Z}_+, m=1, 2, \ldots, \rho_j, k\in\mathcal{R}_j\right\}$ forms a Parseval wavelet frame in $L_2$
iff the following conditions are fulfilled
\begin{equation}
\label{b1}
 \sum\limits_{m=1}^{\rho_{0}}\overline{\widehat{b^m_{1}}(0)}\widehat{b^m_{1}}(1) = 0,   
\end{equation}
\begin{equation}\label{infpr}
 \prod_{r=2}^{\infty}  \left|\tilde{a}_{r}(n)\right|^2 = 
2\sum\limits_{m=1}^{\rho_{0}}\left|\widehat{b^m_{1}}(n)\right|^2 \left|\widehat{\varphi_1}(n)\right|^2.   
\end{equation}
Moreover,  the wavelet coefficients are defined by 
\begin{equation}\label{bj}
\widehat{b^m_{j}}(n)
= 
\tilde{b}^m_{j}(n)
\sqrt{\sum\limits_{m=1}^{\rho_{0}}\left|\widehat{b^m_{1}}(n)\right|^2}
\prod_{r=2}^j  \frac{\left|\widehat{a_{r}}(n)\right|}{    \left|\tilde{a}_{r}(n)\right|}.
\end{equation}
\end{theorem}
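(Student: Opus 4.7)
The plan is to apply Theorem~\ref{Th2}, which already reduces the Parseval frame property to the two conditions (\ref{con1}) and (\ref{con2}); the task is to show that, once the wavelet coefficients take the form (\ref{bj}), these become exactly (\ref{b1}) and (\ref{infpr}). The preliminary algebraic fact that makes everything work is that the auxiliary family $\tilde{a}_j, \tilde{b}^m_j$ realizes a unitary extension principle by construction: a telescoping computation based on $\cos^2+\sin^2=1$ yields the Pythagorean identity $|\tilde{a}_j(n)|^2+\sum_{m=1}^{\rho_{j-1}}|\tilde{b}^m_j(n)|^2=1$ for every $n$, while the system (\ref{sys2}) is precisely the decomposition into real and imaginary parts of the shift-orthogonality relation $\overline{\tilde{a}_j(n)}\tilde{a}_j(n+2^{j-1})+\sum_{m=1}^{\rho_{j-1}}\overline{\tilde{b}^m_j(n)}\tilde{b}^m_j(n+2^{j-1})=0$.

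Next I would substitute (\ref{bj}) into the recursion (\ref{theta1}) for $\theta_q$. Using the Pythagorean identity above, an Abel-type telescoping collapses the sum to the closed form $\theta_q(n)=\sum_{m=1}^{\rho_0}|\widehat{b^m_1}(n)|^2\prod_{r=2}^{q}|\widehat{a_r}(n)|^2/|\tilde{a}_r(n)|^2$. Iterating the refinement equation (\ref{refeq}) — which is permissible because every $\widehat{\varphi_r}(n)$, and hence every $\widehat{a_r}(n)$, is nonzero — gives $|\widehat{\varphi_q}(n)|^2=2^{1-q}|\widehat{\varphi_1}(n)|^2/\prod_{r=2}^{q}|\widehat{a_r}(n)|^2$. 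Multiplying these two formulas, the $|\widehat{a_r}|$ factors cancel, and the quantity in (\ref{con1}) simplifies to $2|\widehat{\varphi_1}(n)|^2\sum_{m=1}^{\rho_0}|\widehat{b^m_1}(n)|^2/\prod_{r=2}^{q}|\tilde{a}_r(n)|^2$; passing to $q\to\infty$ identifies (\ref{con1}) with (\ref{infpr}).

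For (\ref{con2}) I would treat $j=0$ and $j\ge 1$ separately. The initial value $\theta_0\equiv 0$ reduces (\ref{con2}) at $j=0$ to $\sum_{m=1}^{\rho_0}\overline{\widehat{b^m_1}(n)}\widehat{b^m_1}(n+1)=0$, which by the $2$-periodicity of $\widehat{b^m_1}$ is equivalent to (\ref{b1}). For $j\ge 1$, substituting (\ref{bj}) into both terms of (\ref{con2}) and separating the $2^j$-periodic prefactor $\theta_j(n)$ from the $2^{j+1}$-periodic factor $|\widehat{a_{j+1}}|/|\tilde{a}_{j+1}|$ evaluated at $n$ versus $n+2^j$, the shift-orthogonality of $\tilde{a},\tilde{b}^m$ allows one to rewrite $\sum_m\overline{\tilde{b}^m_{j+1}(n)}\tilde{b}^m_{j+1}(n+2^j)$ as $-\overline{\tilde{a}_{j+1}(n)}\tilde{a}_{j+1}(n+2^j)$. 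After cancellation, (\ref{con2}) reduces to the phase-matching identity that $\overline{\widehat{a_{j+1}}(n)}\widehat{a_{j+1}}(n+2^j)$ and $\overline{\tilde{a}_{j+1}(n)}\tilde{a}_{j+1}(n+2^j)$ share the same argument, which is arranged through the free phase parameters $t^k_{\rho_j+1}$.

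The main obstacle I anticipate is the periodicity bookkeeping in the last step: one must carefully split factors of different levels ($|\widehat{a_r}|$ and $|\tilde{a}_r|$ with $r\le j$ are $2^j$-periodic and thus common to $n$ and $n+2^j$, whereas the $r=j+1$ factor is not), and verify that $\theta_j(n)\neq 0$ — guaranteed by the standing assumptions $\widehat{\varphi_j}(n)\neq 0$ and $\sum_m|\widehat{b^m_1}(n)|^2\neq 0$ — before dividing through. Once the UEP character of the spherical parametrization is established in the first paragraph, the remainder is a direct, if intricate, computation.
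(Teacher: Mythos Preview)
Your plan follows essentially the same route as the paper, but organized in the reverse direction. The paper starts from the Theorem~\ref{Th2} conditions and introduces the change of variables
\[
\tilde{a}_j(n)=\sqrt{\theta_{j-1}(n)/\theta_j(n)}\,\widehat{a_j}(n),\qquad \tilde{b}^m_j(n)=\widehat{b^m_j}(n)/\sqrt{\theta_j(n)},
\]
which converts (\ref{con2}) together with two copies of (\ref{theta}) into the UEP system (\ref{sys0}); the spherical parametrization is then invoked as the \emph{complete} description of (\ref{sys0}), yielding (\ref{tildea}), (\ref{tildeb}), (\ref{sys2}), and from the change of variables one reads off (\ref{bj}) and the closed form $\theta_j=\theta_1\prod_{r=2}^j |\widehat{a_r}|^2/|\tilde a_r|^2$. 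You take (\ref{tildea})--(\ref{sys2}) as given, verify the UEP identities, substitute (\ref{bj}), telescope to the same closed form for $\theta_j$, and check (\ref{con1}), (\ref{con2}). The computations are identical.

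Two points to tighten. First, your ``phase-matching'' step is exactly what the paper's change of variables handles automatically: there $\tilde a_j$ inherits the argument of $\widehat{a_j}$ by definition. In your direction you must say that $t^k_{\rho_j+1}$ is not a residual free parameter but is \emph{set} to $\arg\widehat{a_{j+1}}$ at $n$ and $n+2^j$; since this difference also enters the $m=0$ term of (\ref{sys2}), you should observe that with this choice (\ref{sys2}) becomes precisely (\ref{con2}) at level $j$ rather than an independent constraint. Second, your outline only treats the substitution direction ((\ref{bj})$\Rightarrow$(\ref{con1}),(\ref{con2})). The necessity direction --- every Parseval system has masks of the form (\ref{bj}) for some parameters --- requires the completeness of the spherical parametrization of the unit sphere, which the paper uses explicitly; make sure you state this rather than only the forward verification.
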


\begin{proof}
    We need to check whether  equations (\ref{con1})-(\ref{theta}) are equivalent to equations (\ref{b1})-(\ref{bj}). First, we give a complete parametric solution of the system consisting of three equations, one copy of  (\ref{con2}) and two copies of (\ref{theta}), one for $n$, another for $n+ 2^{j-1}$
    \begin{equation}
        \label{sys_i}
    \begin{cases}
    \theta_{j-1}(n)\overline{\widehat{a_{j}}(n)}\widehat{a_{j}}(n+2^{j-1})+\sum\limits_{m=1}^{\rho_{j-1}}\overline{\widehat{b^m_{j}}(n)}\widehat{b^m_{j}}(n+2^{j-1}) = 0, 
    \\
    \sum\limits_{m=1}^{\rho_{j-1}}\left|\widehat{b^m_{j}}(n)\right|^2 + 
    \theta_{j-1}(n) \left|\widehat{a_{j}}(n)\right|^2 = \theta_{j}(n),
    \\
    \sum\limits_{m=1}^{\rho_{j-1}}\left|\widehat{b^m_{j}}(n+2^{j-1})\right|^2 + 
    \theta_{j-1}(n) \left|\widehat{a_{j}}(n+2^{j-1})\right|^2 = \theta_{j}(n+2^{j-1}).
\end{cases}    
    \end{equation}
    To this end we introduce auxiliary sequences 
\begin{equation}
\label{conthet}
    \tilde{a}_{j}(n) := \sqrt{\frac{\theta_{j-1}(n)}{\theta_{j}(n)}}\widehat{a_{j}}(n)
    \qquad
    \tilde{b}^m_{j}(n) := \sqrt{\frac{1}{\theta_{j}(n)}}\widehat{b^m_{j}}(n).
\end{equation}
The assumption $\widehat{\varphi_j}(n) \neq 0$ for all $j \in \mathbb{N}$, $n\in\mathbb{Z}$,
 and (\ref{refeq}) imply that  $\widehat{a_{j}}(n) \neq 0$ for all $j \ge 2$, $n\in\mathbb{Z}$. Then by the assumption 
 $
\sum\limits_{m=1}^{\rho_{0}}\left|\widehat{b^m_{1}}(n)\right|^2  \neq 0
$
and (\ref{theta}), we conclude that $\theta_{j}(n)\neq 0$ for all $j\in \mathbb{N}$. 
Therefore, $\tilde{a}_{j}(n)$ and $\tilde{b}^m_{j}(n)$ are well-defined and 
$\tilde{a}_{j}(n) \neq 0$ $j \ge 2$, $n\in\mathbb{Z}$.

With respect to new variables system (\ref{sys_i}) takes the form
\begin{equation}
\label{sys0}
\begin{cases}
    \overline{\tilde{a}_{j}}(n)\tilde{a}_{j}(n+2^{j-1})+\sum\limits_{m=1}^{\rho_{j-1}}\overline{\tilde{b}^m_{j}}(n)\tilde{b}^m_{j}(n+2^{j-1}) = 0,
    \\
     \sum\limits_{m=1}^{\rho_{j-1}}\left|\tilde{b}^m_{j}(n)\right|^2 + 
    \left|\tilde{a}_{j}(n)\right|^2 =1,
    \\
      \sum\limits_{m=1}^{\rho_{j-1}}\left|\tilde{b}^m_{j}(n+2^{j-1})\right|^2 + 
    \left|\tilde{a}_{j}(n+2^{j-1})\right|^2 =1.
\end{cases}    
\end{equation}
We denote  
\begin{equation*}
    \begin{split}
    &
\tilde{a}_{j}(n) =:x^0_0+i y^0_0, \qquad  
\tilde{a}_{j}(n+2^{j-1}) =:x^1_0+i y^1_0,
\\
&
\tilde{b}^m_{j}(n) =:x^0_m+i y^0_m, \qquad  
\tilde{b}^m_{j}(n+2^{j-1}) =:x^1_m+i y^1_m.        
    \end{split}
\end{equation*}
Then the last system becomes
\begin{equation}
\label{sys}
\begin{cases}
\sum\limits_{m=0}^{\rho_{j-1}}\left((x^0_m)^2+(y^0_m)^2\right)=1,\\
\sum\limits_{m=0}^{\rho_{j-1}}\left((x^1_m)^2+(y^1_m)^2\right)=1,\\ \sum\limits_{m=0}^{\rho_{j-1}}\left(x^0_m x^1_m + y^0_m y^1_m\right)=0,\\
\sum\limits_{m=0}^{\rho_{j-1}}\left(x^0_m y^1_m - y^0_m x^1_m\right)=0.\\
\end{cases}
\end{equation}
To satisfy the first two equations we use a kind of spherical coordinates: first, we explore ordinary spherical coordinates for $\sqrt{(x^k_m)^2+(y^k_m)^2}$, $k=0,1$, 
$$
\begin{cases}
(x^k_0)^2+(y^k_0)^2=\cos^2 t^k_{\rho_{j-1}} \cos^2 t^k_{\rho_{j-1}-1} \dots \cos^2 t^k_{2} \cos^2 t^k_{1},
\\
(x^k_1)^2+(y^k_1)^2= \cos^2 t^k_{\rho_{j-1}} \cos^2 t^k_{\rho_{j-1}-1} \dots \cos^2 t^k_{2} \sin^2 t^k_{1},\\
(x^k_2)^2+(y^k_2)^2= \cos^2 t^k_{\rho_{j-1}} \cos^2 t^k_{\rho_{j-1}-1} \dots \sin^2 t^k_{2},\\
(x^k_3)^2+(y^k_3)^2=\cos^2 t^k_{\rho_{j-1}} \cos^2 t^k_{\rho_{j-1}-1} \dots \sin^2 t^k_{3}, \\
\dots \\
(x^k_{\rho_{j-1}-1})^2+(y^k_{\rho_{j-1}-1})^2= \cos^2 t^k_{\rho_{j-1}} \sin^2 t^k_{\rho_{j-1}-1},\\
(x^k_{\rho_{j-1}})^2+(y^k_{\rho_{j-1}})^2= \sin^2 t^k_{\rho_{j-1}},\\
\end{cases}
k=0,1.
$$
then we parameterize the last expressions by polar coordinates 
$$
\begin{cases}
x^k_0=\cos t^k_{\rho_{j-1}} \cos t^k_{\rho_{j-1}-1} \dots \cos t^k_{2} \cos t^k_{1} \cos t^k_{\rho_{j-1}+1},
\\
y^k_0= \cos t^k_{\rho_{j-1}} \cos t^k_{\rho_{j-1}-1} \dots \cos t^k_{2} \cos t^k_{1} \sin t^k_{\rho_{j-1}+1},\\
x^k_1=  \cos t^k_{\rho_{j-1}} \cos t^k_{\rho_{j-1}-1} \dots \cos t^k_{2} \sin t^k_{1} \cos t^k_{\rho_{j-1}+2},\\
y^k_1= \cos t^k_{\rho_{j-1}} \cos t^k_{\rho_{j-1}-1} \dots \cos t^k_{2} \sin t^k_{1} \sin t^k_{\rho_{j-1}+2}, \\
\dots \\
x^k_{\rho_{j-1}}=  \sin t^k_{\rho_{j-1}} \cos t^k_{2\rho_{j-1}+1},\\
y^k_{\rho_{j-1}}= \sin t^k_{\rho_{j-1}} \sin t^k_{2\rho_{j-1}+1},\\
\end{cases}
k=0,1.    
$$
Then  the last two equations of (\ref{sys}) read as follows
$$
\begin{cases}
\sum\limits_{m=0}^{\rho_{j-1}} \cos \left(t^0_{\rho_{j-1}+1+m}-t^1_{\rho_{j-1}+1+m}\right)
\prod\limits_{k=0}^1\sin t^k_m 
\prod\limits_{r=m+1}^{\rho_{j-1}} \cos t^k_r = 0,
\\
\sum\limits_{m=0}^{\rho_{j-1}} \sin \left(t^0_{\rho_{j-1}+1+m}-t^1_{\rho_{j-1}+1+m}\right)
\prod\limits_{k=0}^1\sin t^k_m 
\prod\limits_{r=m+1}^{\rho_{j-1}} \cos t^k_r = 0,
\end{cases}
$$
where 
$
\prod\limits_{r=\rho_{j-1}+1}^{\rho_{j-1}} \cos t^k_r 
$
is understood as $1$. Thus, $\tilde{a}_{j}(n)$ and $\tilde{b}^m_{j}(n)$ are defined by 
(\ref{tildea}) and (\ref{tildeb}) respectively and the parameters $t^k_r$ are linked by system  (\ref{sys2}).

Since $\tilde{a}_{2}(n),\dots,\tilde{a}_{j}(n) \neq 0$, then with a solution of system (\ref{sys0}) in hands we obtain from (\ref{conthet})
\begin{equation}
\label{thetarec}
\theta_{j}(n)= \frac{\left|\widehat{a_{j}}(n)\right|^2}{    \left|\tilde{a}_{j}(n)\right|^2} \theta_{j-1}(n)    
\end{equation}
and
\begin{equation}
    \label{brec}
 \widehat{b^m_{j}}(n) =  \tilde{b}^m_{j}(n) \sqrt{\theta_{j}(n)} = 
 \tilde{b}^m_{j}(n)
 \frac{\left|\widehat{a_{j}}(n)\right|}{ \left|\tilde{a}_{j}(n)\right|}
 \sqrt{\theta_{j-1}(n)}.
\end{equation}
Solving the recursion, we get
\begin{equation}
\label{theta2}
 \theta_{j}(n)
=
\theta_{1}(n)
\prod_{r=2}^{j}  \frac{\left|\widehat{a_{r}}(n)\right|^2}{    \left|\tilde{a}_{r}(n)\right|^2} 
=
\sum\limits_{m=1}^{\rho_{0}}\left|\widehat{b^m_{1}}(n)\right|^2
\prod_{r=2}^{j}  \frac{\left|\widehat{a_{r}}(n)\right|^2}{    \left|\tilde{a}_{r}(n)\right|^2}.    
\end{equation}
By (\ref{brec}) and (\ref{theta2}), we get (\ref{bj})
$$
\widehat{b^m_{j}}(n)  = 
\tilde{b}^m_{j}(n)
\sqrt{\sum\limits_{m=1}^{\rho_{0}}\left|\widehat{b^m_{1}}(n)\right|^2}
\prod_{r=2}^{j}  \frac{\left|\widehat{a_{r}}(n)\right|}{    \left|\tilde{a}_{r}(n)\right|},
$$
where $\widehat{b^m_{1}}(n)$ ($2$-periodic sequence)  is chosen according to (\ref{con2}), that is
$$
 \sum\limits_{m=1}^{\rho_{0}}\overline{\widehat{b^m_{1}}(0)}\widehat{b^m_{1}}(1) = 0.   
$$
The last condition we need to satisfy is (\ref{con1}). By (\ref{refeq}), we get
$$
\widehat{\varphi_1}(n) 
= 2^{(j-1)/2}\widehat{\varphi_j}(n) \prod_{r=2}^{j}  \widehat{a_r}(n) .
$$
Together with (\ref{theta2}), we obtain 
$$
 2^j |\widehat{\varphi_j}(n)|^2 \theta_j(n) 
 = 
  2^j |\widehat{\varphi_j}(n)|^2 
\sum\limits_{m=1}^{\rho_{0}}\left|\widehat{b^m_{1}}(n)\right|^2
\prod_{r=2}^j  \frac{\left|\widehat{a_{r}}(n)\right|^2}{\left|\tilde{a}_{r}(n)\right|^2}
$$
$$
=2 \left|\widehat{\varphi_1}(n)\right|^2 
\sum\limits_{m=1}^{\rho_{0}}\left|\widehat{b^m_{1}}(n)\right|^2
\frac{1}{\prod_{r=2}^j  \left|\tilde{a}_{r}(n)\right|^2}.
$$
Thus, (\ref{con1}) is equivalent to  (\ref{infpr}).
\end{proof}

\begin{remark}
System (\ref{sys2}) can be solved with respect to two parameters by various ways. For example,     interpreting  (\ref{sys2})   as a linear system with respect to $\cos t^0_{\rho_{j-1}} \cos t^1_{\rho_{j-1}}$ 
and $\sin t^0_{\rho_{j-1}} \sin t^1_{\rho_{j-1}}$
$$
\begin{cases}
\cos t^0_{\rho_{j-1}} \cos t^1_{\rho_{j-1}} \sum\limits_{m=0}^{\rho_{j-1}-1} 
\cos \left(t^0_{\rho_{j-1}+1+m}-t^1_{\rho_{j-1}+1+m}\right)
\prod\limits_{k=0}^1\sin t^k_m 
\prod\limits_{r=m+1}^{\rho_{j-1}-1} \cos t^k_r 
\\
+
\sin t^0_{\rho_{j-1}} \sin t^1_{\rho_{j-1}} 
\cos \left(t^0_{2\rho_{j-1}+1}-t^1_{2\rho_{j-1}+1}\right)
= 0,
\\
\cos t^0_{\rho_{j-1}} \cos t^1_{\rho_{j-1}} \sum\limits_{m=0}^{\rho_{j-1}-1} 
\sin \left(t^0_{\rho_{j-1}+1+m}-t^1_{\rho_{j-1}+1+m}\right)
\prod\limits_{k=0}^1\sin t^k_m 
\prod\limits_{r=m+1}^{\rho_{j-1}-1} \cos t^k_r 
\\
+
\sin t^0_{\rho_{j-1}} \sin t^1_{\rho_{j-1}} 
\sin \left(t^0_{2\rho_{j-1}+1}-t^1_{2\rho_{j-1}+1}\right)
= 0,
\end{cases}
$$
 we conclude that if
$$
\sum\limits_{m=0}^{\rho_{j-1}-1} \sin \left(t^0_{\rho_{j-1}+1+m}-t^1_{\rho_{j-1}+1+m}-\left(t^0_{2\rho_{j-1}+1}-t^1_{2\rho_{j-1}+1}\right)\right)
\prod\limits_{k=0}^1\sin t^k_m 
\prod\limits_{r=m+1}^{\rho_{j-1}-1} \cos t^k_r \neq 0,
$$
then $\cos t^0_{\rho_{j-1}} = \sin t^1_{\rho_{j-1}} =0$ or $\cos t^1_{\rho_{j-1}} = \sin t^0_{\rho_{j-1}} =0,$ the last equations contradict the condition $\tilde{a}_{j}(n) \neq 0$.
So 
we conclude that the equations of the system are linearly dependent. We obtain the equivalent system if  we keep one of the equations and accompany it by the zero determinant of system  (\ref{sys2}) 
$$
    \begin{cases}
\cos t^0_{\rho_{j-1}} \cos t^1_{\rho_{j-1}} \sum\limits_{m=0}^{\rho_{j-1}-1} 
\cos \left(t^0_{\rho_{j-1}+1+m}-t^1_{\rho_{j-1}+1+m}\right)
\prod\limits_{k=0}^1\sin t^k_m 
\prod\limits_{r=m+1}^{\rho_{j-1}-1} \cos t^k_r 
\\
+
\sin t^0_{\rho_{j-1}} \sin t^1_{\rho_{j-1}} 
\cos \left(t^0_{2\rho_{j-1}+1}-t^1_{2\rho_{j-1}+1}\right)
= 0,
\\
\sum\limits_{m=0}^{\rho_{j-1}-1} \sin \left(t^0_{\rho_{j-1}+1+m}-t^1_{\rho_{j-1}+1+m}-\left(t^0_{2\rho_{j-1}+1}-t^1_{2\rho_{j-1}+1}\right)\right)
\prod\limits_{k=0}^1\sin t^k_m 
\prod\limits_{r=m+1}^{\rho_{j-1}-1} \cos t^k_r = 0.
\end{cases}
$$
The resulting system can be solved with respect to two variables, for example, if $t^1_{\rho_{j}-1}\neq \pi n,$ $t^0_{2\rho_{j-1}}-t^1_{2\rho_{j-1}}-\left(t^0_{2\rho_{j-1}+1}-t^1_{2\rho_{j-1}+1}\right)\neq \pi n$, $t^1_{\rho_{j}}\neq \pi n,$ $t^0_{2\rho_{j-1}+1}-t^1_{2\rho_{j-1}+1}\neq \pi/2 + \pi n$,  $n\in \mathbb{Z}$, then 
we can find
 $\tan t^0_{\rho_{j-1}-1}$ from the second equation
 \begin{equation*}
     \begin{split}
   &
     \tan t^0_{\rho_{j-1}-1} = - \cot t^1_{\rho_{j-1}-1}
\\
&
     \times \frac{
\sum\limits_{m=0}^{\rho_{j-1}-2} \sin \left(t^0_{\rho_{j-1}+1+m}-t^1_{\rho_{j-1}+1+m}-\left(t^0_{2\rho_{j-1}+1}-t^1_{2\rho_{j-1}+1}\right)\right)
\prod\limits_{k=0}^1\sin t^k_m 
\prod\limits_{r=m+1}^{\rho_{j-1}-1} \cos t^k_r}{\sin \left(t^0_{2\rho_{j-1}}-t^1_{2\rho_{j-1}}-\left(t^0_{2\rho_{j-1}+1}-t^1_{2\rho_{j-1}+1}\right)\right)}
     \end{split}
 \end{equation*}
 and substituting it into  the first one we can express 
$\tan t^0_{\rho_{j-1}}$ 
$$
\tan t^0_{\rho_{j-1}}  = -\cot t^1_{\rho_{j-1}}  \frac{ \sum\limits_{m=0}^{\rho_{j-1}-1} 
\cos \left(t^0_{\rho_{j-1}+1+m}-t^1_{\rho_{j-1}+1+m}\right)
\prod\limits_{k=0}^1\sin t^k_m 
\prod\limits_{r=m+1}^{\rho_{j-1}-1} \cos t^k_r }{\cos \left(t^0_{2\rho_{j-1}+1}-t^1_{2\rho_{j-1}+1}\right)}.
$$
A bit another solution of system (\ref{sys2}) is provided in the following Corollary.
\hfill $\scriptstyle \Box$
\end{remark}

We look closer at a very important particular case of one wavelet generator $\rho_j=1.$
As a result we obtain the following 

\begin{corollary}
\label{Cor1}

  Suppose functions $\varphi_j\in L_2,\ j\in\mathbb{N}$, satisfy the refinement equation
(\ref{refeq})
for some $\widehat{a_{j+1}}\in\mathcal{S}\left(2^{j+1}\right)$ and $\widehat{\varphi_j}(n) \neq 0$ for all $j\in \mathbb{N}$, $n\in\mathbb{Z}$.
Let  $\psi_j\in L_2$ $j\in\mathbb{N}$, be defined by the wavelet equation
$$
\widehat{\psi_j}(n)=\sqrt{2}\,\widehat{b_{j+1}}(n)\widehat{\varphi_{j+1}}(n),\quad n\in\mathbb{Z}\,
$$
with some sequences $\widehat{b_{j+1}}\in\mathcal{S}\left(2^{j+1}\right)$.
Let $m=1,2,$ 
$$
\widehat{\psi^m_0}(n)=\sqrt{2}\,\widehat{b^m_{1}}(n)\widehat{\varphi_{1}}(n),\quad n\in\mathbb{Z}\,
$$
with some sequences $\widehat{b^m_{1}}\in\mathcal{S}\left(2\right)$.
 Suppose 
$
\sum\limits_{m=1}^{2}\left|\widehat{b^m_{1}}(n)\right|^2 \neq 0.
$

We introduce auxiliary coefficients   
$
\tilde{a}_{j}(n),\, \tilde{b}_{j}(n) \in \mathcal{S}\left(2^{j}\right),
j \ge 2,
$
\begin{equation}
\label{coef1}
    \begin{cases}
        \tilde{a}_{j}(n) = \cos t^0_1\, {\rm e}^{i t^0_2}, \\
\tilde{a}_{j}(n+2^{j-1}) = \pm \sin t^0_1\, {\rm e}^{i t^1_2},\\
\tilde{b}_{j}(n) = \sin t^0_1\, {\rm e}^{i t^0_3},\\  
\tilde{b}_{j}(n+2^{j-1}) = \pm \cos t^0_1\, {\rm e}^{ \pm i \left(t^0_3+t^1_2-t^0_2)\right)},\\
    \end{cases}
\end{equation}
where $j \in \mathbb{N}$, $n=0,\dots,2^{j-1}-1,$ $t^0_1,$ $t^0_2,$ $t^0_3,$ and $t^1_2$ 
depend on $j$ and $n$, and signs ``+'' and ``-'' are chosen independently.

Then the collection $\left\{\psi^1_0, \psi^{2}_0\right\}\cup\left\{S^k_j\psi_j : j\in\mathbb{N},  k\in\mathcal{R}_j\right\}$ forms a Parseval wavelet frame in $L_2$
iff the following  conditions are fulfilled
$$
 \sum\limits_{m=1}^{2}\overline{\widehat{b^m_{1}}(0)}\widehat{b^m_{1}}(1) = 0,   
$$
 $$
 \prod_{r=2}^{\infty}  \left|\tilde{a}_{r}(n)\right|^2 = 
2\sum\limits_{m=1}^{2}\left|\widehat{b^m_{1}}(n)\right|^2 \left|\widehat{\varphi_1}(n)\right|^2.      
$$
Moreover,  
$$
\widehat{b_{j}}(n) 
= 
\tilde{b}_{j}(n)
\sqrt{\sum\limits_{m=1}^{2}\left|\widehat{b^m_{1}}(n)\right|^2}
\prod_{r=2}^j  \frac{\left|\widehat{a_{r}}(n)\right|}{    \left|\tilde{a}_{r}(n)\right|}.
$$

\end{corollary}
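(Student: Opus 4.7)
The plan is to obtain the Corollary as the direct specialization of Theorem~\ref{Th3} to the case $\rho_0=2$, $\rho_j=1$ for $j\in\mathbb{N}$. Under this choice every standing hypothesis of Theorem~\ref{Th3}---the refinement equation on $\varphi_j$, the wavelet equations for the generators, and the non-vanishing conditions on $\widehat{\varphi_j}(n)$ and on $\sum_{m=1}^{2}|\widehat{b^m_1}(n)|^2$---is matched verbatim by the hypotheses of the Corollary. Consequently the orthogonality condition (\ref{b1}), the infinite-product identity (\ref{infpr}), and the mask formula (\ref{bj}) of Theorem~\ref{Th3} transfer unchanged and give the last three displayed identities of the Corollary. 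The only real work is to show that, when $\rho_{j-1}=1$, the explicit parametrization (\ref{coef1}) coincides with the complete parametric solution (\ref{tildea})--(\ref{sys2}) supplied by Theorem~\ref{Th3}.

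To do this I would first substitute $\rho_{j-1}=1$ into (\ref{tildea}) and (\ref{tildeb}), obtaining $\tilde a_j(\cdot)=\cos t^k_1\,e^{it^k_2}$ and $\tilde b^1_j(\cdot)=\sin t^k_1\,e^{it^k_3}$ for $k=0,1$ (with $k=0$ at argument $n$ and $k=1$ at $n+2^{j-1}$). Splitting the single complex orthogonality $\overline{\tilde a_j(n)}\,\tilde a_j(n+2^{j-1})+\overline{\tilde b^1_j(n)}\,\tilde b^1_j(n+2^{j-1})=0$ (which is what system (\ref{sys2}) encodes in this case) into real and imaginary parts yields the homogeneous $2\times 2$ linear system
$$
\begin{pmatrix}
\cos(t^0_2-t^1_2) & \cos(t^0_3-t^1_3)\\
\sin(t^0_2-t^1_2) & \sin(t^0_3-t^1_3)
\end{pmatrix}
\begin{pmatrix}\cos t^0_1\cos t^1_1\\ \sin t^0_1\sin t^1_1\end{pmatrix}
=
\begin{pmatrix}0\\ 0\end{pmatrix}.
$$
Since $\tilde a_j(\cdot)\ne 0$ forces $\cos t^0_1\cos t^1_1\ne 0$, the coefficient matrix must be singular, which pins down the phase relation $t^1_3=t^0_3-t^0_2+t^1_2\pmod{\pi}$.

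Feeding this back into the surviving linear equation, and observing that $\cos(t^0_3-t^1_3)=\pm\cos(t^0_2-t^1_2)$ according to the residue, factors it as $\cos(t^0_2-t^1_2)\cdot\cos(t^0_1\mp t^1_1)=0$. Discarding the degenerate branch $\cos(t^0_2-t^1_2)=0$ and keeping $\cos(t^0_1\mp t^1_1)=0$ gives $\cos t^1_1=\pm\sin t^0_1$ and $\sin t^1_1=\pm\cos t^0_1$ with signs that may be chosen independently. Substituting these expressions into $\tilde a_j(n+2^{j-1})=\cos t^1_1\,e^{it^1_2}$ and $\tilde b^1_j(n+2^{j-1})=\sin t^1_1\,e^{it^1_3}$ and absorbing the modulo-$\pi$ ambiguity in $t^1_3$ into the sign inside the exponent reproduces exactly (\ref{coef1}). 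The converse direction---verifying that every choice of parameters and signs in (\ref{coef1}) satisfies (\ref{sys2})---is then a direct substitution.

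The main (and essentially only) obstacle is the trigonometric bookkeeping in the previous step: carefully matching the three independent $\pm$ signs in (\ref{coef1}) to the branches of the degenerate solution of the system, so that the parametrization is verified to exhaust the solution set. Once this matching is complete, the rest of the Corollary is a literal transcription from Theorem~\ref{Th3}.
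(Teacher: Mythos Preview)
Your approach is essentially the same as the paper's: specialize Theorem~\ref{Th3} to $\rho_0=2$, $\rho_j=1$ for $j\ge1$, and then solve system~(\ref{sys2}) explicitly in the case $\rho_{j-1}=1$ by treating it as a $2\times2$ homogeneous linear system in $\cos t^0_1\cos t^1_1$ and $\sin t^0_1\sin t^1_1$, forcing the determinant to vanish, and reading off $\cos(t^0_1\mp t^1_1)=0$.

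There is one small slip in your trigonometric bookkeeping. After imposing the phase relation $t^1_3\equiv t^0_3-t^0_2+t^1_2\pmod\pi$, you reduce to a single equation $\cos(t^0_2-t^1_2)\cdot\cos(t^0_1\mp t^1_1)=0$ and then ``discard the degenerate branch $\cos(t^0_2-t^1_2)=0$''. That branch cannot simply be discarded: you must still satisfy the imaginary-part equation, which under the same phase relation reads $\sin(t^0_2-t^1_2)\cdot\cos(t^0_1\mp t^1_1)=0$. Since $\cos(t^0_2-t^1_2)$ and $\sin(t^0_2-t^1_2)$ cannot vanish simultaneously, the two equations \emph{together} force $\cos(t^0_1\mp t^1_1)=0$ in every case. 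This is exactly how the paper closes the argument, and it is a one-line fix to your write-up; the rest of your reduction to (\ref{coef1}) and the transcription of (\ref{b1}), (\ref{infpr}), (\ref{bj}) from Theorem~\ref{Th3} is correct.
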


\begin{proof}
Taking into account Theorem \ref{Th3} all we need is to get a solution of system (\ref{sys2}).  
  It takes the form
$$
\begin{cases}
 \cos t^0_1 \cos t^1_1 \cos\left(t^0_2 - t^1_2\right) + \sin t^0_1 \sin t^1_1 \cos\left(t^0_3 - t^1_3\right) = 0, \\
  \cos t^0_1 \cos t^1_1 \cos\left(t^0_2 - t^1_2\right) + \sin t^0_1 \sin t^1_1 \sin\left(t^0_3 - t^1_3\right) = 0.\\
\end{cases}
$$
Interpreting the equations  as a linear system with respect to $\cos t^0_1 \cos t^1_1$ 
and $\sin t^0_1 \sin t^1_1$, we conclude that if 
$$
\cos\left(t^0_2 - t^1_2\right)\sin\left(t^0_3 - t^1_3\right) -
\cos\left(t^0_3 - t^1_3\right) \cos\left(t^0_2 - t^1_2\right) = 
\sin \left(t^0_3 - t^1_3 -t^0_2 + t^1_2\right)\neq 0,
$$
then $\cos t^0_1 = \sin t^1_1 =0$ or $\cos t^1_1 = \sin t^0_1 =0$, that 
 contradicts the condition $\tilde{a}_{j}(n) \neq 0$.
So, there is  a connection between parameters
$$
t^0_3 - t^1_3 =t^0_2 - t^1_2 +\pi q, \quad q\in \mathbb{Z}.
$$
Therefore,
$$
\begin{cases}
\left(\cos t^0_1 \cos t^1_1 + (-1)^q \sin t^0_1 \sin t^1_1\right) \cos\left(t^0_2 - t^1_2\right) =0,\\
\left(\cos t^0_1 \cos t^1_1 + (-1)^q \sin t^0_1 \sin t^1_1\right) \sin\left(t^0_2 - t^1_2\right) =0,
\end{cases}
$$
or equivalently,
$$
\cos\left(t^0_1 \pm  t^1_1\right) =0.
$$
Thus, general solution of system (\ref{sys0}) in the case of one wavelet generator $\rho_j=1$ can be parameterized by (\ref{coef1}).

\end{proof}
\begin{remark}
    Corollary \ref{Cor1} is a particular case of Theorem \ref{Th3} with $\rho_j=1$ for $j \in \mathbb{N}$ and $\rho_0=2.$ It is impossible to take $\rho_0=1$, otherwise, the assumptions $
\sum\limits_{m=1}^{\rho_{0}}\left|\widehat{b^m_{1}}(n)\right|^2  \neq 0
$   and (\ref{b1}) are inconsistent. 
\end{remark}

The next theorem is the main result of the paper. It gives us the constructive, explicit  characterization of the Parseval wavelet frame without any restrictions on refinable functions.

\begin{theorem}
\label{Th4}
    Suppose  functions $\varphi_j\in L_2,\ j\in\mathbb{N}$, satisfy the refinement 
    equation (\ref{refeq})
for some $\widehat{a_{j+1}}\in\mathcal{S}\left(2^{j+1}\right)$.
For every $j\in\mathbb{Z}_+$ fix a positive integer $\rho_j$ and suppose  $\psi^m_j\in L_2$, $m=1, \ldots, \rho_j$ are defined by the wavelet equation (\ref{waveq})
with some sequences $\widehat{b^m_{j+1}}\in\mathcal{S}\left(2^{j+1}\right),\ m=1, \ldots, \rho_{j}$. Let $\theta_j(n)$ be a sequence  recursively defined by (\ref{theta}). 
 
We introduce auxiliary coefficients   
$
\tilde{a}_{j+1}(n),\, \tilde{b}^m_{j+1}(n) \in \mathcal{S}\left(2^{j+1}\right),
j \in\mathbb{N},
$
by (\ref{tildea}), (\ref{tildeb})
where the parameters 
$t^k_r =t^k_r(j,n),$ $r=1,\dots,  2\rho_j+1,$
satisfy the system (\ref{sys2})
unless $\theta_{j+1}(n+2^{j})\neq 0.$
If $\theta_{j+1}(n+2^{j})= 0$, then the parameters 
$t^k_r =t^k_r(j,n),$ $r=1,\dots,  2\rho_j+1,$
are arbitrary.

Then the collection 
$\left\{S^k_j\psi^m_j : j\in\mathbb{Z}_+, m=1, 2, \ldots, \rho_j, k\in\mathcal{R}_j\right\}$ forms a Parseval wavelet frame in $L_2$
iff the following conditions are fulfilled
		\begin{enumerate}
			\item    $\{j : \widehat{\varphi_j}(n)\neq 0\} 
			\cap
			\left\{
			j : 
			\sum\limits_{m=1}^{\rho_{j-1}}\left|\widehat{b^m_{j}}(n)\right|^2 \neq 0
			\right\}
			\neq \varnothing$;
		\item 
				$ \displaystyle 
				\prod_{r=j_1+1}^{\infty}  \left|\tilde{a}_{r}(n)\right|^2
				=
				2^{j_1} \sum\limits_{m=1}^{\rho_{j_1-1}}\left|\widehat{b^m_{j_1}}(n)\right|^2
				\left|\widehat{\varphi_{j_1}}(n)\right|^2,
			$
   
			where
			$
			j_1:=\min \{j : \widehat{\varphi_j}(n)\neq 0\} 
			\cap
			\left\{
			j : 
			\sum\limits_{m=1}^{\rho_{j-1}}\left|\widehat{b^m_{j}}(n)\right|^2 \neq 0
			\right\}.
			$
			\item $\widehat{b^m_{j}}(n)$ is defined as follows
			$$
\begin{array}{ll}
	(a) \quad \text{arbitrary, }    & j\in \{1,\dots, j_0\},  
	\\
	(b) \quad  0,   &  j\in \{j_0+1,\dots, j_1-1\},
	\\
	(c) \quad    
	\sum\limits_{m=1}^{\rho_{j_1-1}}\overline{\widehat{b^m_{j_1}}(n)}\widehat{b^m_{j_1}}(n+2^{j_1-1}) = 0,
	& 
	j=j_1,\
	\\
	(d) \quad \displaystyle \tilde{b}^m_j(n) \sqrt{\sum\limits_{m=1}^{\rho_{j_1-1}}\left|\widehat{b^m_{j_1}}(n)\right|^2} \prod_{r=j_1+1}^j \frac{\left|\widehat{a_{r}}(n)\right|}{\left|\tilde{a}_{r}(n)\right|},    &   j\in \{j_1+1,j_1+2,\dots\},
\end{array}
$$
where $j_0+1:= \min \{j : \widehat{\varphi_j}(n)\neq 0\}.$
		\end{enumerate}
\end{theorem}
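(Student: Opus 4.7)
The strategy is to reduce Theorem \ref{Th4} to Theorem \ref{Th3} by isolating the range of indices on which both $\widehat{\varphi_j}(n)\neq 0$ and $\sum_m|\widehat{b^m_j}(n)|^2\neq 0$. First I would use (\ref{refeq}) to observe that $\widehat{\varphi_{j+1}}(n)=0$ forces $\widehat{\varphi_j}(n)=0$, so the set $\{j\in\mathbb{N}:\widehat{\varphi_j}(n)\neq 0\}$ is either empty or of the form $\{j_0+1,j_0+2,\dots\}$. If it is empty, then (\ref{waveq}) yields $\widehat{\psi^m_j}(n)=0$ for every $j$ and $m$, so (\ref{1th1}) fails; hence the first half of condition~1 is necessary. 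A direct consequence of (\ref{refeq}) is that $\widehat{a_r}(n)\neq 0$ for all $r\geq j_0+2$, while $\widehat{a_{j_0+1}}(n)=0$ whenever $j_0\geq 1$; this will be used to collapse (\ref{theta1}) below.

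Next, for $j+1\leq j_0$ we have $\widehat{\varphi_{j+1}}(n)=0$ and thus $\widehat{\psi^m_j}(n)=0$ by (\ref{waveq}), independently of $\widehat{b^m_{j+1}}(n)$; this justifies case (a). The existence of $j_1$ (the second half of condition~1) follows because otherwise every $\widehat{\psi^m_j}(n)$ vanishes and (\ref{1th1}) cannot equal $1$. For $j\in\{j_0+1,\dots,j_1-1\}$ the minimality defining $j_1$ forces $\widehat{b^m_j}(n)=0$ for all $m$, which is case (b).

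For the main range $j\geq j_1$ I would apply a shifted version of Theorem \ref{Th3}. The vanishing of $\widehat{a_{j_0+1}}(n)$ together with $\widehat{b^m_p}(n)=0$ for $j_0+1\leq p\leq j_1-1$ causes formula (\ref{theta1}) to telescope: for every $q\geq j_1$ only the summands with $p\geq j_1$ survive, and in particular $\theta_{j_1}(n)=\sum_m|\widehat{b^m_{j_1}}(n)|^2\neq 0$, so $\theta_{j_1}$ plays the role that $\theta_1$ plays in Theorem \ref{Th3}. The parametric solution of system (\ref{sys0}) via the angles $t^k_r$ together with the recursions (\ref{thetarec})--(\ref{brec}) then transplants verbatim with base level $1$ replaced by $j_1$: condition (c) is the analogue of (\ref{b1}), formula (d) is the analogue of (\ref{bj}), and condition~2 is obtained from the telescoping identity $2^q|\widehat{\varphi_q}(n)|^2\theta_q(n)=\sum_{j=0}^{q-1}\sum_m 2^j|\widehat{\psi^m_j}(n)|^2$ used in the proof of Theorem \ref{Th2}, in the limit $q\to\infty$, exactly as (\ref{infpr}) was derived.

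The chief technical obstacle is the degenerate mode $\theta_{j+1}(n+2^j)=0$ in which the statement permits the parameters $t^k_r$ to be arbitrary. I would handle this by noting that the collapsed version of (\ref{theta1}) forces every $\widehat{\psi^m_q}(n+2^j)$ appearing in (\ref{2th1}) to vanish, so both sides of (\ref{con2}) at that mode vanish identically, the ``unless'' clause of Theorem \ref{Th2} is triggered, and the parameters $t^k_r$ drop out of every active constraint; this closes the remaining boundary case and shows that conditions 1--3 are simultaneously necessary and sufficient.
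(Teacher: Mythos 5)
Your proposal takes essentially the same route as the paper: identify $j_0$ and $j_1$ from the structure of $\{j:\widehat{\varphi_j}(n)\neq 0\}$ forced by (\ref{refeq}), derive (3)(a)--(c) from $\widehat{a_{j_0+1}}(n)=0$ and the collapse of (\ref{theta1}) to $\theta_{j_1-1}(n)=0$, $\theta_{j_1}(n)=\sum_m|\widehat{b^m_{j_1}}(n)|^2\neq 0$, transplant the parametric solution of (\ref{sys0}) with base level $j_1$ to get (3)(d) and condition 2, and dispose of the degenerate mode $\theta_{j}(n+2^{j-1})=0$ by showing every product in (\ref{2th1}) vanishes there. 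The one sub-case your ``verbatim transplant'' glosses over is $\widehat{a_{j}}(n+2^{j-1})=0$ with $\theta_{j}(n+2^{j-1})\neq 0$, where $\tilde{a}_{j}(n+2^{j-1})=0$ so the nonvanishing asserted in (\ref{tildea}) fails at the conjugate mode; the paper resolves this by noting that $\widehat{b^m_{j}}(n+2^{j-1})$ is not being determined at that step, so one never divides by the vanishing coefficient and the recursion (\ref{thetarec})--(\ref{brec}) for the mode $n$ itself goes through unchanged.
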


\begin{proof}  

Given $n\in\mathbb{Z}.$
Suppose the collection
$\left\{S^k_j\psi^m_j : j\in\mathbb{Z}_+, m=1, 2, \ldots, \rho_j, k\in\mathcal{R}_j\right\}$ forms a Parseval wavelet frame in $L_2$.
Then by (\ref{con1}),  
$
\{j : \widehat{\varphi_j}(n)\neq 0\} \neq \varnothing. 
$
Therefore, 
by  refinement equation (\ref{refeq}) there exists $j_0=j_0(n)$
such that 
$$
\{j : \widehat{\varphi_j}(n)\neq 0\} = \{j : j > j_0\}.
$$
Then, by (\ref{1th1}) and (\ref{waveq}), 
$$
\sum_{j=0}^{\infty} 2^{j+1} \left( \sum_{m=1}^{\rho_j} 
\left|\widehat{b^m_{j+1}}(n)\right|^2 \right)
\left|\widehat{\varphi_{j+1}}(n)\right|^2 
=
\sum_{j=j_0}^{\infty} 2^{j+1} \left( \sum_{m=1}^{\rho_j} 
\left|\widehat{b^m_{j+1}}(n)\right|^2 \right)
\left|\widehat{\varphi_{j+1}}(n)\right|^2 
= 1.
$$
Therefore, 
$$
 \{j : \widehat{\varphi_j}(n)\neq 0\} 
			\cap
			\left\{
			j : 
			\sum\limits_{m=1}^{\rho_{j-1}}\left|\widehat{b^m_{j}}(n)\right|^2 \neq 0
			\right\}
			\neq \varnothing.
$$
Thus, (1) holds true. 

Now we check that the wavelet masks $\widehat{b^m_{j}}(n)$ satisfy (3).  First of all, we note that 
since $\widehat{\varphi_1}(n) = \dots = \widehat{\varphi_{j_0}}(n) = 0$, it follows that  
$\widehat{a_{1}}(n),$ $\dots,$ $\widehat{a_{j_0}}(n)$, $\widehat{b^m_{1}}(n),$
$\dots,$ $\widehat{b^m_{j_0}}(n)$ can be chosen arbitrary. Thus, (3)(a) follows.  
By 
the wavelet equation 
$\widehat{\psi^m_1}(n)=\dots =\widehat{\psi^m_{j_0}}(n)=0.$ 

For $j\in \{j_0+1,\dots, j_1-1\}$, the definition of $j_1$
implies (3)(b).

To check (3)(c) we note that 
wavelet masks 
$\widehat{b^m_{j}}(n)$
satisfy system (\ref{sys_i}). In particular, the first equation has the form
$$
\theta_{j_1-1}(n)\overline{\widehat{a_{j_1}}(n)}\widehat{a_{j_1}}(n+2^{j_1-1})+\sum\limits_{m=1}^{\rho_{j_1-1}}\overline{\widehat{b^m_{j_1}}(n)}\widehat{b^m_{j_1}}(n+2^{j_1-1}) = 0.
$$
The definition of $j_0$ and  refinement equation (\ref{refeq}) implies $\widehat{a_{j_0+1}}(n)=0.$ Therefore, if  $j_1=j_0+1$, then we immediately get (3)(c). If  $j_1\ge j_0+2$, then   
it follows from (3)(b), recursion (\ref{theta}), and $\widehat{a_{j_0+1}}(n)=0$  that
$$
\theta_{j_1-1}(n) = \theta_{j_0}(n)
\prod_{r=j_0+1}^{j_1-1}\left|\widehat{a_r}(n)\right|^2 = 0. 
$$
Thus, we obtain (3)(c) as well. Actually, we obtain an equivalence of (3)(c) and (\ref{con2}) for $j=j_1.$

For $j\in \{j_1+1, j_1+2, \dots \}$ we again consider system (\ref{sys_i}).
By the definition $j_1$, we have
$
\displaystyle 
\sum\limits_{m=1}^{\rho_{j_1-1}}\left|\widehat{b^m_{j_1}}(n)\right|^2 \neq 0,
$
so by recursion (\ref{theta}) 
$$
   \theta_{j_1}(n) = \sum\limits_{m=1}^{\rho_{j_1-1}}\left|\widehat{b^m_{j_1}}(n)\right|^2 + 
    \theta_{j_1-1}(n) \left|\widehat{a_{j_1}}(n)\right|^2  \neq 0. 
$$
Therefore,  by $\widehat{a_{j}}(n) \neq 0$ and recursion (\ref{theta})
we obtain $\theta_{j}(n) \neq 0$ for $j\ge j_1.$

 There are two options for 
$\widehat{a_{j}}(n+2^{j-1}).$ 
 The 1-st case. $\widehat{a_{j}}(n+2^{j-1})\neq 0.$
In this case 
$\theta_{j}(n+2^{j-1}) \neq 0$ by (\ref{theta}) and $\theta_{j-1}(n) \neq 0$.
That is why we can proceed as in the proof of Theorem \ref{Th3}. Indeed, we introduce auxiliary coefficients 
$\tilde{a}_{j}(n)$, $\tilde{a}_{j}(n+2^{j-1})$, $\tilde{b}^m_{j}(n)$, and 
$\tilde{b}^m_{j}(n+2^{j-1})$ by (\ref{conthet}). We solve system (\ref{sys0}) with respect to these coefficients, obtain solutions (\ref{tildea}), (\ref{tildeb}), and  (\ref{sys2})  and define  $\theta_{j}(n)$ by (\ref{thetarec}) 
and $\widehat{b^m_{j}}(n)$ by (\ref{brec}). Therefore, (3)(d) follows  by recursion.

The 2-nd case. $\widehat{a_{j}}(n+2^{j-1}) = 0.$
If $\theta_{j}(n+2^{j-1}) = 0$, then 
$\widehat{b^m_{j}}(n+2^{j-1}) = 0$ for $m=1,\dots, \rho_{j-1}$. Therefore, 
 (\ref{con2}) is fulfilled. We need to satisfy the only equation 
 (\ref{theta}) 
 $$
    \sum\limits_{m=1}^{\rho_{j-1}}\left|\widehat{b^m_{j}}(n)\right|^2 + 
    \theta_{j-1}(n) \left|\widehat{a_{j}}(n)\right|^2 = \theta_{j}(n).
 $$
 Paramertizing by (\ref{conthet}) we get
 $$
  \sum\limits_{m=1}^{\rho_{j-1}}\left|\tilde{b}^m_{j}(n)\right|^2 + 
    \left|\tilde{a}_{j}(n)\right|^2 =1.
 $$
 The solution is represented in (\ref{tildea}) and (\ref{tildeb}) for $k=0.$ The coefficients  $\theta_{j}(n)$  
and $\widehat{b^m_{j}}(n)$ are defined by (\ref{thetarec}) and (\ref{brec})
respectively. The case (3)(d) follows  by recursion. Note that in this case we do not need  (\ref{sys2}). 

Suppose $\theta_{j}(n+2^{j-1}) \neq 0$, then we can again proceed as in the proof of Theorem \ref{Th3}. The only difference is that  now $\tilde{a}_{j}(n+2^{j-1})=0.$
However, we do not need to find $\widehat{b^m_{j}}(n+2^{j-1})$ at this step. So, we do not need to divide by $\tilde{a}_{j}(n+2^{j-1}).$ Again, (\ref{thetarec}) and (\ref{brec}) define  the coefficients  $\theta_{j}(n)$  
and $\widehat{b^m_{j}}(n)$ respectively. Therefore, (3)(d) follows by recursion.
We note that in fact  we have checked here an equivalence (3)(d) and system (\ref{sys_i}) for  $j\in \{j_1+1, j_1+2, \dots \}$.

Now we check that (\ref{con1}) is equivalent to (2).
Since for $j>j_1$ the coefficients  $\theta_{j}(n)$  
and $\widehat{b^m_{j}}(n)$ are defined by (\ref{thetarec}) and (\ref{brec})
respectively, it is possible to apply recursion. As a result, we get as in the proof
of Theorem \ref{Th3}
$$
 \theta_{j}(n)
=
\theta_{j_1}(n)
\prod_{r=j_1+1}^{j}  \frac{\left|\widehat{a_{r}}(n)\right|^2}{    \left|\tilde{a}_{r}(n)\right|^2}. 
$$
 By (\ref{refeq}), we obtain
$$
\widehat{\varphi_{j_1}}(n) 
= 2^{(j-j_1)/2}\widehat{\varphi_j}(n) \prod_{r=j_1+1}^{j}  \widehat{a_r}(n) .
$$
Therefore,
\begin{equation*}
    \begin{split}
  &
  2^j |\widehat{\varphi_j}(n)|^2 \theta_j(n) 
 = 
  2^j |\widehat{\varphi_j}(n)|^2 
\theta_{j_1}(n)
\prod_{r=j_1+1}^j  \frac{\left|\widehat{a_{r}}(n)\right|^2}{\left|\tilde{a}_{r}(n)\right|^2}
\\
&
=2^{j_1} \left|\widehat{\varphi_{j_1}}(n)\right|^2 
\theta_{j_1}(n)
\frac{1}{\prod_{r=j_1+1}^j  \left|\tilde{a}_{r}(n)\right|^2}.    \end{split}
\end{equation*}
Thus, (\ref{con1}) is equivalent to  (2).

To check sufficiency we examine conditions (\ref{con1})-(\ref{theta}) of Theorem \ref{Th2}. Given $n\in\mathbb{Z}.$
It follows from (1) that there exist $j_0$ and $j_1$ as they are defined above. Then by (\ref{refeq}) 
$\widehat{a_{j_0+1}}(n)=0$ and $\widehat{a_{j}}(n) \neq 0$ for $j\ge j_0+2$ and by (\ref{theta}) $\theta_j(n) =0$ for $j\in \{j_0+1,\dots, j_1-1\}$ and 
$\theta_j(n) \neq 0$ for $j\ge j_1.$ 

We check that (3) implies (\ref{con2}) and (\ref{theta}). For $j \in \{1,\dots,j_0\}$  by the definition of $j_0$ 
we get $\widehat{\varphi_j}(n)=0$, so we do not need to check (\ref{con2}), therefore 
(3)(a) suits us. For $j\in \{j_0+1,\dots, j_1-1\}$ we have 
$\theta_j(n) =0$ and $\widehat{b^m_j}(n)=0,$ $m = 1,\dots, \rho_{j-1},$ therefore 
(\ref{con2}) and (\ref{theta}) hold true. For $j=j_1$ condition (3)(c) implies (\ref{con2}) as it is noted above in the proof of necessity. For $j\ge j_1+1$ condition (3)(d) gives a solution of system (\ref{sys_i}) as it is discussed in the proof of necessity. 
\end{proof}

Among all the conditions representing the sufficiency part of Theorems \ref{Th3}, \ref{Th4}, and Corollary \ref{Cor1} the most complicated seems (\ref{infpr}) and its generalization (2).  In fact, it is not difficult to select the parameters $t^k_r$ defining (\ref{tildea}) to satisfy these conditions. To avoid cumbersomeness and to discuss the tightest with respect to the amount of parameters case, we consider setup of one wavelet generator on each level $j \in \mathbb{N}$, that is $\rho_j=1$.  
 To simplify notations we put $\widehat{b}_j(n)$ instead of $\widehat{b}^1_j(n).$   So, our  question is how to choose the parameters $t^0_1(j,n)$ defining 
 $\tilde{a}_j(n)$ by (\ref{coef1}) to satisfy (2) (or its  particular case (\ref{infpr})). 
First of all, we note that (2) is equivalent to 
\begin{equation}
\label{xi}
  \prod_{r=j_1+1}^{N}  \left|\tilde{a}_{r}(n)\right|^2 = 
\frac{2^{j_1}\left|\widehat{b_{j_1}}(n)\right|^2 \left|\widehat{\varphi_{j_1}}(n)\right|^2}{\xi(n,N)},  
\end{equation}
where 
$\xi(n,N)\to 1$ as $N\to \infty$ for all $n \in \mathbb{Z}.$
To be brief, we denote 
$$
\displaystyle
F(n,N) :=\frac{2^{j_1}\left|\widehat{b_{j_1}}(n)\right|^2 \left|\widehat{\varphi_{j_1}}(n)\right|^2}{\xi(n,N)},
\qquad
\alpha_j(n) := t^0_1(j,n)
$$ 
and consider a sequence of finite dimensional systems 
\begin{equation}
\label{sysN}
    \prod_{r=j_1+1}^{N}  \left|\tilde{a}_{r}(n)\right|^2 =
F(n,N)
\quad n=0,\dots,2^{N-1}-1.
\end{equation}
By periodicity of $\tilde{a}_j$ and (\ref{coef1}) for the remaining $n$, the function $F(n,N)$ satisfy relations 
$$
F(n-2^{N-1},N)+F(n,N) = F(n-2^{N-1},N-1) \mbox{ for } n=2^{N-1},\dots,2^{N}-1,
$$
and $F(n,N)$ is $2^N$-periodic with respect to the first variable $n.$ These properties are satisfied by choice of $\xi(n,N)$. We consider two examples.

\begin{example}
\label{Ex1}
Let $\widehat{\varphi_j}(n) \neq 0$ for all $j\in \mathbb{N}$, $n\in\mathbb{Z}$
and 
$
\sum\limits_{m=1}^{2}\left|\widehat{b^m_{1}}(n)\right|^2 \neq 0.
$
So, we are in assumptions of Corollary \ref{Cor1}. In this case $j_0(n)=0$, $j_1(n)=j_0(n)+1=1$ for all $n\in\mathbb{Z}.$ 
Taking into account periodicity of $\tilde{a}_j$ and (\ref{coef1}) we get for $N=2,$ $N=3,$ $N=4$
$$
\begin{cases}
    \left|\cos \alpha_2(0)\right|^2 = F(0,2), \\
    \left|\cos \alpha_2(1)\right|^2 = F(1,2),
\end{cases}
\begin{cases}
    \left|\cos \alpha_2(0)\right|^2 \left|\cos \alpha_3(0)\right|^2 = F(0,3), \\
    \left|\cos \alpha_2(1)\right|^2 \left|\cos \alpha_3(1)\right|^2  = F(1,3), \\
     \left|\sin \alpha_2(0)\right|^2 \left|\cos \alpha_3(2)\right|^2 = F(2,3), \\
    \left|\sin \alpha_2(1)\right|^2 \left|\cos \alpha_3(3)\right|^2  = F(3,3), \\
\end{cases}
$$
$$
\begin{cases}
    \left|\cos \alpha_2(0)\right|^2 \left|\cos \alpha_3(0)\right|^2 \left|\cos \alpha_4(0)\right|^2= F(0,4), \\
    \left|\cos \alpha_2(1)\right|^2 \left|\cos \alpha_3(1)\right|^2\left|\cos \alpha_4(1)\right|^2  = F(1,4), \\
     \left|\sin \alpha_2(0)\right|^2 \left|\cos \alpha_3(2)\right|^2 \left|\cos \alpha_4(2)\right|^2= F(2,4), \\
    \left|\sin \alpha_2(1)\right|^2 \left|\cos \alpha_3(3)\right|^2  \left|\cos \alpha_4(3)\right|^2= F(3,4), \\
    \left|\cos \alpha_2(0)\right|^2 \left|\sin \alpha_3(0)\right|^2 \left|\cos \alpha_4(4)\right|^2= F(4,4), \\
    \left|\cos \alpha_2(1)\right|^2 \left|\sin \alpha_3(1)\right|^2  \left|\cos \alpha_4(5)\right|^2= F(5,4), \\
     \left|\sin \alpha_2(0)\right|^2 \left|\sin \alpha_3(2)\right|^2 \left|\cos \alpha_4(6)\right|^2= F(6,4), \\
    \left|\sin \alpha_2(1)\right|^2 \left|\sin \alpha_3(3)\right|^2  \left|\cos \alpha_4(7)\right|^2= F(7,4). \\
\end{cases}
$$
In general case, to form the $N+1$-th system we take the left-hand side of the previous one and write it in the first $2^N$ equations, then we repeat these expressions replacing     
$\left|\cos \alpha_N(n)\right|^2$ by $\left|\sin \alpha_N(n)\right|^2$ and put them into the second $2^N$ equations, finally we multiply each expression by $\left|\cos \alpha_{N+1}(n)\right|^2.$
All systems can be solved consequently. We find
$\left|\cos \alpha_{N}(n)\right|^2$
from $N$-th system. For the first three steps we obtain 
$$
\begin{cases}
    \left|\cos \alpha_2(0)\right|^2 = F(0,2), \\
    \left|\cos \alpha_2(1)\right|^2 = F(1,2),
\end{cases}
\begin{cases}
   \left|\cos \alpha_3(0)\right|^2 = \frac{F(0,3)}{F(0,2)}, \\
     \left|\cos \alpha_3(1)\right|^2  = \frac{F(1,3)}{F(1,2)}, \\
     \left|\cos \alpha_3(2)\right|^2 = \frac{F(2,3)}{1- \left|\cos \alpha_2(0)\right|^2}, \\
     \left|\cos \alpha_3(3)\right|^2  = \frac{F(3,3)}{1-\left|\cos \alpha_2(1)\right|^2}, \\
\end{cases}
$$
$$
\begin{cases}
    \left|\cos \alpha_4(0)\right|^2= \frac{F(0,4)}{F(0,3)}, \\
    \left|\cos \alpha_4(1)\right|^2  =\frac{F(1,4)}{F(1,3)}, \\
      \left|\cos \alpha_4(2)\right|^2= \frac{F(2,4)}{F(2,3)}, \\
      \left|\cos \alpha_4(3)\right|^2= \frac{F(3,4)}{F(3,3)}, \\
      \left|\cos \alpha_4(4)\right|^2= \frac{F(4,4)}{F(0,2)\left(1-\left|\cos \alpha_3(0)\right|^2\right)}, \\
      \left|\cos \alpha_4(5)\right|^2= \frac{F(5,4)}{F(1,2)\left(1-\left|\cos \alpha_3(1)\right|^2\right)}, \\
   \left|\cos \alpha_4(6)\right|^2= \frac{F(6,4)}{\left(1-\left|\cos \alpha_2(0)\right|^2\right)\left(1-\left|\cos \alpha_3(2)\right|^2\right)}, \\
   \left|\cos \alpha_4(7)\right|^2= \frac{F(7,4)}{\left(1-\left|\cos \alpha_2(1)\right|^2\right)\left(1-\left|\cos \alpha_3(3)\right|^2\right)}. \\
\end{cases}
$$
In general case we get the following solution.

If $n=0,\dots,2^{N-2}-1$, then
$$
   \left|\cos \alpha_N(n)\right|^2 = \frac{F(n,N)}{F(n,N-1)}.
$$
If $n=2^{N-2}+k,\, k=0,\dots,+2^{N-3}-1$, then
\begin{equation*}
    \begin{split}
  &
  \left|\cos \alpha_N(n)\right|^2 = \frac{F(2^{N-2}+k,N)}{F(k,N-2)\left(1-\left|\cos \alpha_{N-1}(k)\right|^2\right)}
\\
&
=\frac{F(2^{N-2}+k,N)}{F(k,N-2)-F(k,N-1)}.
    \end{split}
\end{equation*}
If $n=2^{N-2}+2^{N-3}+k, \,k=0,\dots,2^{N-2}+2^{N-3}+2^{N-4}-1$, then
\begin{equation*}
    \begin{split}
  &
   \left|\cos \alpha_N(n)\right|^2 = \frac{F(2^{N-2}+2^{N-3}+k,N)}{F(k,N-3)\left(1-\left|\cos \alpha_{N-2}(k)\right|^2\right)\left(1-\left|\cos \alpha_{N-1}(2^{N-3}+k)\right|^2\right)}
\\
&
=\frac{F(2^{N-2}+2^{N-3}+k,N)}{F(k,N-3)-F(k,N-2)-F(2^{N-3}+k,N-1)}.
\end{split}
\end{equation*}
If $n=2^{N-2}+2^{N-3}+2^{N-4}+k, \,k=0\dots,2^{N-2}+2^{N-3}+2^{N-4}+2^{N-5}-1$, 
then
\begin{equation*}
    \begin{split}
  &
   \left|\cos \alpha_N(n)\right|^2 
=F(2^{N-2}+2^{N-3}+2^{N-4}+k,N)\left(F(k,N-4) \right.
\\
&
\left.
-F(k,N-3)
-F(2^{N-4}+k,N-2)-F(2^{N-3}+2^{N-4}+k,N-1)
\right)^{-1}.
\end{split}
\end{equation*}
If $n=2^{N-2} + 2^{N-3} + \dots +2^1+k,\, k=0,1$ , then
\begin{equation*}
    \begin{split}
  &
   \left|\cos \alpha_N(n)\right|^2 
=F(2^{N-2} + 2^{N-3} + \dots +2^1+k,N)\left(1- F(k,2)\right.
\\
&
\left.
-F(2+k,3)
-\dots-F(2^{N-3}+\dots+2+k,N-1)
\right)^{-1}.
\end{split}
\end{equation*}
Since $F(n,N)>0,$ it follows that $\left|\cos \alpha_N(n)\right|,$
$\left|\sin \alpha_N(n)\right|\neq 0$. So, the expressions on the right-hand side are well-defined. However, there is another natural restriction, namely these expressions should be in the interval $(0,1)$. Since all the nominators are strictly positive, the last requirement is equivalent to the fact that the difference between the denominator and the nominator is strictly positive. Comparing the solutions of $N$-th and $N+1$-th system we notice that $r+1$-th inequality of $N+1$-th system implies $r$-th inequality of $N$-th system. For example, the third inequality of   $N+1$-th system
$$
F(k,N-2)-F(k,N-1)-F(2^{N-2}+k,N)-F(2^{N-1}+2^{N-2}+k,N+1)>0,
$$
$n=2^{N-1}+2^{N-2}+k, \,k=0,\dots,+2^{N-3}-1$,
implies the second inequality of $N$-th system
$$
F(k,N-2)-F(k,N-1)-F(k+2^{N-2},N)>0
$$
for $n=2^{N-2}+k,\, k=0,\dots,+2^{N-3}-1$.
Therefore, all the inequalities of the fixed chain of inequalities
hold simultaneously iff we extend the sum to an infinite number of terms.
In the above example the  chain consists of $p$-th inequalities of $N+p-2$-th systems, $p\in\mathbb{N}$, and the resulting inequality takes the form  
$$
F(k,N-1)+F(2^{N-2}+k,N)+F(2^{N-1}+2^{N-2}+k,N+1)+\dots<F(k,N-2),
$$
$k=0,\dots,+2^{N-3}-1.$
For all the chains we obtain 
\begin{equation}
\label{solinfpr1}
F(k,2)+\sum_{N=1}^{\infty}F(k+2^1+\dots+2^N, N+2)<1, \quad k=0,1,    
\end{equation}
\begin{equation}
\label{solinfpr2}
F(k,M+1)+\sum_{N=M}^{\infty}F(k+2^M+\dots+2^N, N+2)<F(k,M), 
\end{equation}
$k=0,\dots,2^{M-1}-1, M\ge 2.$
\end{example}
\begin{remark}
    Let us summarize findings of Example \ref{Ex1}. Suppose we take a sequence of non-zero numbers  $\left(\widehat{\varphi_1}(n)\right) \in l_2$, $n\in \mathbb{Z},$ and four numbers $\widehat{b^m_{1}}(n),$ $m=1,2$,
    $n=0,1$ such that 
    $
\sum\limits_{m=1}^{2}\left|\widehat{b^m_{1}}(n)\right|^2 \neq 0
$
and
$
 \sum\limits_{m=1}^{2}\overline{\widehat{b^m_{1}}(0)}\widehat{b^m_{1}}(1) = 0.   
$ 
Next, we take arbitrary non-zero coefficients $\widehat{a_{j+1}}\in\mathcal{S}\left(2^{j+1}\right)$ and define  $\widehat{\varphi_{j+1}}(n),$ $j \in \mathbb{N},$ $n\in\mathbb{Z}.$
Then refinable functions $\varphi_{j}$, $j \in \mathbb{N},$
generate a Parseval wavelet frame iff
 inequalities 
(\ref{solinfpr1}) and  (\ref{solinfpr2}) are fulfilled. Thus, the existence of the frame 
completely depends on the sequence $\widehat{\varphi_1}(n)$, $n\in \mathbb{Z}.$
\end{remark}
\begin{example}
\label{Ex2}
    We still consider a case of one wavelet generator,  $\rho_j=1$ for $j\in\mathbb{N}$, $\rho_0=2$.  Let refinable functions  $\varphi_{j}$ be trigonometric polynomials   and $\{n\,:\, \widehat{\varphi_{j}}(n) \neq 0\}=\{-2^j,\dots, 2^j-1\}.$
Then by definitions of $j_0$  we get
$j_0(n)+1 = \left\lceil\log_2 (n+1) \right\rceil$ for $n\in\mathbb{N}$ 
and $j_0(0)+1 =1.$ Let $j_1(n)=j_0(n)+1,$ that is $\widehat{b}_{j_0(n)+1}(n)\neq 0.$
By (3)(c) we need equality 
$
\overline{\widehat{b}_{j_0(n)+1}(n)} \widehat{b}_{j_0(n)+1}(n+2^{j_0(n)}) = 0.
$
It is straightforward to see that $j_0(n+2^{j_0(n)})> j_0(n)$ for $n\in\mathbb{N}$, so by (3)(a) the coefficient
$\widehat{b}_{j_0(n)+1}(n+2^{j_0(n)})$ can be chosen arbitrary, we put 
$\widehat{b}_{j_0(n)+1}(n+2^{j_0(n)}) =0$
to satisfy (3)(c). 
Taking into account  the definition of $j_1(n)$ and (\ref{coef1}) system (\ref{sysN}) has the form for $N=2,$ $N=3,$ $N=4$
$$
\begin{cases}
    \left|\cos \alpha_2(0)\right|^2 = F(0,2), \\
    \left|\cos \alpha_2(1)\right|^2 = F(1,2),
\end{cases}
\begin{cases}
    \left|\cos \alpha_2(0)\right|^2 \left|\cos \alpha_3(0)\right|^2 = F(0,3), \\
    \left|\cos \alpha_2(1)\right|^2 \left|\cos \alpha_3(1)\right|^2  = F(1,3), \\
     \left|\cos \alpha_3(2)\right|^2 = F(2,3), \\
     \left|\cos \alpha_3(3)\right|^2  = F(3,3), \\
\end{cases}
$$
$$
\begin{cases}
    \left|\cos \alpha_2(0)\right|^2 \left|\cos \alpha_3(0)\right|^2 \left|\cos \alpha_4(0)\right|^2= F(0,4), \\
    \left|\cos \alpha_2(1)\right|^2 \left|\cos \alpha_3(1)\right|^2\left|\cos \alpha_4(1)\right|^2  = F(1,4), \\
      \left|\cos \alpha_3(2)\right|^2 \left|\cos \alpha_4(2)\right|^2= F(2,4), \\
    \left|\cos \alpha_3(3)\right|^2  \left|\cos \alpha_4(3)\right|^2= F(3,4), \\
    \left|\cos \alpha_4(4)\right|^2= F(4,4), \\
      \left|\cos \alpha_4(5)\right|^2= F(5,4), \\
    \left|\cos \alpha_4(6)\right|^2= F(6,4), \\
      \left|\cos \alpha_4(7)\right|^2= F(7,4). \\
\end{cases}
$$
In general case, to form the $N+1$-th system we take the left-hand side of the previous one and write it in the first $2^N$ equations, then we put just $1$'s into the second $2^N$ equations, finally we multiply each expression by $\left|\cos \alpha_{N+1}(n)\right|^2.$ These systems are much simpler than systems of Example \ref{Ex1}. We immediately get the solution
$$
\left|\cos \alpha_N(n)\right|^2= 
\begin{cases}
\displaystyle 
    \frac{F(n,N)}{F(n,N-1)} = \frac{\xi(n,N-1)}{\xi(n,N)}, \quad n=0,\dots,2^{N-2}-1, \\
 \displaystyle    F(n,N) = \frac{2^{j_1}\left|\widehat{b_{j_1}}(n)\right|^2 \left|\widehat{\varphi_{j_1}}(n)\right|^2}{\xi(n,N)}, \quad n=2^{N-2},\dots,2^{N-1}-1.
\end{cases}
$$
The solution is correct iff
$0< \left|\cos \alpha_N(n)\right|^2< 1$, the last condition can be controlled by the choice of $\xi$. Namely, 
\begin{equation*}
    \begin{split}
      &
      \xi(n,N-1) < \xi(n,N) , \quad n=0,\dots,2^{N-2}-1,
\\
&
\xi(n,N) > 2^{j_1}\left|\widehat{b_{j_1}}(n)\right|^2 \left|\widehat{\varphi_{j_1}}(n)\right|^2, \quad n=2^{N-2},\dots,2^{N-1}-1.
    \end{split}
\end{equation*}
\end{example}


\begin{thebibliography}{0}


\bibitem{12} O. Christensen, S.S. Goh, Pairs of oblique duals in spaces of periodic functions, {\small\it Adv. Comput. Math.}, {\small\bf 32} (2010), 353--379


\bibitem{Daub} I. Daubechies, The wavelet transform, time-frequency localization and signal analysis, {\small\it IEEE Trans. Inform. Theory}, {\small\bf 36}(5) (1990), 961-1005. 

\bibitem{dhrs} I. Daubechies, B. Han, A. Ron, Z. Shen, Framelets: MRA-based constructions of wavelet frames, {\small\it Appl. Comput. Harmon. Anal.}, {\small\bf 14}(1) (2003), 1--46

\bibitem{gt} S. S. Goh, K. M. Teo, Extension principles for tight wavelet frames of periodic functions, {\small\it Appl. Comput. Harmon. Anal.}, {\small\bf 25}(2) (2008), 168--186


\bibitem{ghs} S. S. Goh, B. Han, Z. Shen, Tight periodic wavelet frames and approximation orders, {\small\it Appl. Comput. Harmon. Anal.}, {\small\bf 31}(2) (2011), 228--248

\bibitem{Grip}  G. Gripenberg,  A  necessary  and  sufficient  condition  for  the  existence  of  father wavelet, {\small\it Studia  Mathematica}, {\small\bf 114}(3) (1995), 207--226


\bibitem{Lem} P.-G.~Lemari\'e-Rieusset, 
Ondelettes \`a localisation exponentielle, {\small\it J. Math. Pures Appl.}, {\small\bf 67} (1988), 227--236

\bibitem{NPS}   I. Ya. Novikov, V.Yu. Protasov, M.A. Skopina, {\small\it Wavelet Theory} (Moscow: Fizmatlit (2006), English transl., Translations of Mathematical Monographs,  Vol. 239, Amer. Math. Soc., Providence, RI, 2011)

\bibitem{RShUEP} A. Ron, Z. Shen, Affine systems in $L_2(R^d)$: the analysis of the analysis operator, {\small\it J. Functional Anal.}, {\small\bf 148}(2) (1997), 408--447



\end{thebibliography}
\end{document}